\newtheorem{theorem}{Theorem}
\newtheorem{lemma}[theorem]{Lemma}
\DeclareMathOperator{\Bin}{Bin}
\def\N{{\mathbb N}}
\def\R{{\mathbb R}}
\newcommand{\Exp}{\mathbb{E}}
\def\eps{\varepsilon}
\def\S{{\cal S}}
\begin{document}

\title{Drift Analysis and Evolutionary Algorithms Revisited}

\author{Johannes Lengler,
Angelika Steger}
\date{
\ \\*[-2em]
ETH Z\"urich, Department of Computer Science, Switzerland\\
\bigskip
{\em In memory of Ji\v{r}\'i Matou\v{s}ek}}

\maketitle

\begin{abstract}
One of the easiest randomized greedy optimization algorithms is the following evolutionary algorithm which aims at maximizing a boolean {function} $f:\{0,1\}^n \to {\mathbb R}$. The algorithm starts with a random search point $\xi \in \{0,1\}^n$, and in each round it flips each bit of $\xi$ with probability $c/n$ independently at random, where $c>0$ is a fixed constant. The thus created {offspring} $\xi'$ replaces $\xi$ if and only if $f(\xi') \geq f(\xi)$. The analysis of the runtime of this simple algorithm for monotone and for linear functions turned out to be highly non-trivial. In this paper we review  known results and provide new and self-contained proofs of partly stronger results. 
\end{abstract}

\section{Introduction}
Evolutionary algorithms have a long and successful tradition in solving real world optimization problems. Despite much effort the theoretical understanding of these algorithms, however, is still very limited, even for seemingly `trivial' versions.  
One of the easiest evolutionary algorithms aims at maximizing a pseudo-boolean {function} $f:\{0,1\}^n \to {\mathbb R}$. This algorithm starts with a random search point $\xi \in \{0,1\}^n$, and in each round it flips each bit of $\xi$ with probability (or \emph{mutation rate}) $c/n$ independently, where $c>0$ is the so-called \emph{mutation parameter}. The thus created \emph{offspring} $\xi'$ replaces $\xi$ if and only if $f(\xi') \geq f(\xi)$. Naturally, the interesting question is to determine the \emph{optimization time} of $f$, i.e.,  the number of rounds until a maximum of $f$ is found (in expectation). Even for the seemingly trivial case that $f$ is a strictly monotone function (that is, $f(\xi) > f(\xi')$ for all $\xi,\xi'$ so that $\xi\not=\xi'$ and $\xi_i\ge \xi_i'$ for all $1\le i\le n$) determining the asymptotic optimization time turned out to be far from trivial. One of the first rigorous results in this direction was~\cite{Muehlenbein92} who determined the optimization time for the case that $f(\xi)=\sum_{i=1}^n\xi_i$. For general linear functions it required substantial efforts \cite{Droste2002,HeYao:04:drift,jagerskupper2011combining,doerr2012non,Doe-Joh-Win:j:12:multiDrift} until  Doerr and Goldberg~\cite{DoerrGoldberg:j:11:adaptiveDrift} finally showed that the optimization time is $\Theta(n\log n)$ for all constant mutation parameters $c>0$. 
For general monotone functions it is easy to see, cf. e.g.~\cite{doerr2013mutation}, that the optimization time is  $\Theta(n\log n)$ for all constants $0<c<1$. However, as it turned out, this is not necessarily so for larger mutation parameters.
Doerr et al.~\cite{doerr2013mutation} showed that there are monotone functions such that for $c>16$ the algorithm takes exponential time. 

In  this paper we provide short and elegant proofs for various drift theorems that will allow us to
give simple proofs for the above statements which are partly stronger than previous results. The exact results can be found in Sections~\ref{sec:monotone} and~\ref{sec:linear}. The paper is completely self-contained. 

\section{Basic Tools}\label{drift:theorems}

The main tool in the proofs (both in previous proofs and in ours) is drift analysis. Going back to the seminal paper of Hajek~\cite{Haj:j:82}, drift analysis has been developed as a convenient tool to estimate the expected hitting time of algorithms and random processes (see~\cite{kotzing2016concentration} for an overview). 
In this section we collect various known statements and provide short and self-contained proofs. Our emphasis is on elegance of proofs rather than obtaining general statements. We always assume the following setup. We study a Markov chain $(X_t)_{t \in\N_0}$ over some state space $\S$, which we sometimes restrict to subsets of $\{0\} \cup [1,\infty)$, or simply to $\S = \N_0$. With $T$ we denote the random variable that denotes the earliest point in time $t$ such that $X_t = 0$. We are usually interested in determining the expectation $\Exp[T]$ as a function of the initial state $X_0=n$. As we are only interested in the first time where $X_t$ reaches zero we may (and do) assume without loss of generality  that $X_{T+1} = X_{T+2} = \ldots = 0$. 

Our first theorem is a reformulation of Wald's equation, cf.\ also He and Yao~\cite{HeY01}.
\begin{theorem}\label{thm:drift:t1}
Let $(X_t)_{t \in\N_0}$ be a Markov chain with state space ${\cal S} \subseteq [0,\infty)$ and assume $X_0=n$. Let $T$ be the earliest point in time $t \geq 0$ such
that $X_t  = 0$. If there exists $c > 0$ such that for all $x\in {\cal S}, x > 0$ and for all $t\geq 0$ we have
\begin{equation}\label{eq:additivedrift}
\Exp[X_{t+1}\mid X_t=x]\le x - c,
\end{equation}
then
$$
\Exp[T] \leq \frac{n}{c}.
$$
\end{theorem}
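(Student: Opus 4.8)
The plan is to use the drift condition to show that the process $X_t$ decreases by at least $c$ in expectation each step until it hits zero, and then to exploit the optional-stopping / telescoping structure to bound $\Exp[T]$. Concretely, I would introduce the auxiliary process $Y_t := X_t + c \cdot \min\{t, T\}$ and argue it is a supermartingale: conditioned on $X_t = x > 0$, we have $\Exp[Y_{t+1} \mid \mathcal{F}_t] = \Exp[X_{t+1} \mid X_t = x] + c(t+1) \le (x - c) + c(t+1) = x + ct = Y_t$ by~\eqref{eq:additivedrift}, while for $x = 0$ the process has already stopped (recall the convention $X_{T+1} = X_{T+2} = \cdots = 0$), so $Y_t$ stays constant. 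Hence $\Exp[Y_t] \le \Exp[Y_0] = n$ for all $t$.

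An equivalent and perhaps cleaner route avoids explicit martingale language and argues directly by summing the drift. For each $t$, on the event $\{T > t\}$ we have $X_t > 0$, so the drift bound gives $\Exp[X_{t+1} - X_t \mid \mathcal{F}_t] \le -c$ on that event, while on $\{T \le t\}$ the difference is exactly $0$. Taking expectations and summing the telescoping identity $\Exp[X_t] - \Exp[X_0] = \sum_{s=0}^{t-1} \Exp[X_{s+1} - X_s]$ yields
\begin{equation}\label{eq:telescope}
\Exp[X_t] \le n - c \sum_{s=0}^{t-1} \Pr[T > s] \le n - c\,\Exp[\min\{t, T\}],
\end{equation}
where the last step uses $\sum_{s=0}^{t-1} \Pr[T > s] = \Exp[\min\{t,T\}]$. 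Since $X_t \ge 0$ because $\mathcal{S} \subseteq [0,\infty)$, the left-hand side is nonnegative, so $c\,\Exp[\min\{t,T\}] \le n$, i.e. $\Exp[\min\{t,T\}] \le n/c$ for every finite $t$.

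Finally I would let $t \to \infty$. The random variables $\min\{t,T\}$ are nonnegative and increase monotonically to $T$, so by the monotone convergence theorem $\Exp[T] = \lim_{t\to\infty} \Exp[\min\{t,T\}] \le n/c$, which is the claimed bound.

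The main obstacle is handling the possibility that $T = \infty$ with positive probability and ensuring the finiteness of $\Exp[T]$ does not have to be assumed a priori. The truncation $\min\{t, T\}$ together with monotone convergence resolves this cleanly: the per-$t$ bound $\Exp[\min\{t,T\}] \le n/c$ holds unconditionally, and passing to the limit both establishes that $T < \infty$ almost surely (so the process does reach $0$) and delivers the expectation bound simultaneously. A secondary technical point is justifying the interchange of expectation and summation in the telescoping step, which is legitimate because for fixed $t$ the sum is finite and each term is integrable (the increments are dominated using the nonnegativity of $X_t$ and the uniform drift bound).
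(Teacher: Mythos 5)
Your proposal is correct and takes essentially the same route as the paper: telescope the drift inequality, observe that the increment vanishes on $\{T\le t\}$, use nonnegativity of $X_t$ to bound $c\sum_{s=0}^{t-1}\Pr[T>s]$ by $n$, and let $t\to\infty$. Your truncation $\Exp[\min\{t,T\}]$ with monotone convergence is just the paper's limit of the partial sums $\sum_{t=0}^{\tau}\Pr[T>t] \to \Exp[T]$ written in different notation, so no further comparison is needed.
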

\begin{proof}
One easily checks that the condition of the theorem implies that for all $t \in\N_0$
\begin{equation}\label{eq:additivedrift2}
\Exp[X_{t+1}\mid T>t]\le \Exp[X_{t}\mid T>t]- c.
\end{equation}
By conditioning on whether $T>t$ or $T\le t$ we  obtain 
\begin{align}\label{eq:drift:eq}
\Exp[X_t] &= \Pr[T>t] \cdot \Exp[X_t \mid T>t] + \Pr[T\le t] \cdot 0
\;=\; \Pr[T>t] \cdot \Exp[X_t \mid T>t].
\end{align}
Proceeding similarly for $X_{t+1}$  we obtain 
\begin{align}\label{eq:drift:eq2}
\Exp[X_{t+1}]&\;\;=\;\;\Pr[T>t] \cdot\Exp[X_{t+1} \mid\! T>t] + \Pr[T\le t]\cdot 0\nonumber\\
& \stackrel{(\ref{eq:additivedrift2})}\le    \Pr[T>t] \cdot (\Exp[X_{t} \mid T>t] -  c )\nonumber\\ 
& \stackrel{(\ref{eq:drift:eq})}=   \Exp[X_t] - \Pr[T >t] \cdot c.
\end{align}
Since $T$ is a random variable that takes values in $\N_0$, we may write $\Exp[T] = \sum_{t=0}^{\infty}\Pr[T>t]$. Thus 
\begin{align}\label{eq:drift:eq3}
c\cdot\Exp[T] 
& \stackrel{\tau\to\infty}{\longleftarrow}  \sum_{t=0}^\tau c\Pr[T > t] 
\; \stackrel{(\ref{eq:drift:eq2})}\le\;   \sum_{t=0}^\tau (\Exp[X_{t}] - \Exp[X_{t+1}])
\; = \; \Exp[X_0] - \underbrace{\Exp[X_{\tau+1}]}_{\geq 0}
\; \leq\;  n,
\end{align}
which proves the claim of the theorem. 
\end{proof}

\noindent
{\em Remark}. 
Note that in~\eqref{eq:drift:eq3}, some subsequence of $(\Exp[X_{\tau+1}])_{\tau\geq 0}$ converges to $\limsup_{t\to\infty} \Exp[X_t]$, which implies the slightly stronger statement $\Exp[T] \leq \tfrac 1c (n- \limsup_{t\to\infty} \Exp[X_t])$. 
This approach also allows for a reverse version of Theorem~\ref{thm:drift:t1}: if Condition~\eqref{eq:additivedrift} is replaced by the reverse condition $\Exp[X_{t+1}\mid X_t=x]\ge x- c$, then the same proof shows that $\Exp[T] \geq \tfrac 1c (n- \liminf_{t\to\infty} \Exp[X_t])$, i.e., then either $\Exp[T] = \infty$, or $\Exp[T]$ is a finite value which satisfies the inequality.\footnote{Note that $\Exp[X_t]$ does not need to converge to zero, even if $\Exp[T]$ is finite. For example, consider the Markov chain where $X_{t+1}$ is either $0$ or $2X_t$, both with probability $1/2$. Here $\Exp[T]=2$, but $\Exp[X_t]=X_0 =n$ for all $t\geq 0$. This example also shows that the ``naive'' reverse inequality $\Exp[T] \geq \tfrac nc$ does not hold in general, since the precondition $\Exp[X_{t+1}\mid X_t=x]\ge x- c$ is satisfied for all $c>0$.}\medskip

Note that the assumption of Theorem~\ref{thm:drift:t1} cannot hold for Markov chains whose state space $\S$ contains values arbitrarily close to zero. 
The following theorem allows to resolve this problem, as it shows that we can partition the analysis of such Markov chains into two phases: one coming 'close' to zero, the other actually reaching zero.

\begin{theorem}\label{thm:drift:t3}
Let $(X_t)_{t \in\N_0}$ be a Markov chain with countable state space ${\cal S} \subseteq [0,\infty)$ and assume $X_0=n$. Furthermore let $C>0$ be some positive constant and denote by $T_C$ the earliest point in time $t$ such
that $X_t \le C$. Assume that there exist constants $p_0 >0$ and $B>0$ such that for all $c\in {\cal S}$ with $0<c \leq C$ and for all $t\geq 0$ we have 
$$
 \Pr[X_{t+1}=0\mid X_t = c]  \ge p_0 
$$
and 
$$
\sum_{x\in {\cal S}, x > C}\Pr[X_{t+1}=x\mid X_t=c] \cdot \Exp[T_C \mid X_0=x] \le B.
$$
Then $\Exp[T] = \Exp[T_C] + O(1)$.
\end{theorem}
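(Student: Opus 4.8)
The plan is to observe first that reaching $0$ forces the chain to pass through the region $\{x \le C\}$ (since $0 \le C$), so that $T \ge T_C$ and hence $\Exp[T] \ge \Exp[T_C]$; it then remains only to bound the overshoot $\Exp[T-T_C]$ by a constant. I would analyze the evolution after time $T_C$ as a sequence of \emph{low visits} (time steps at which $X_t \in (0,C]$) interspersed with \emph{excursions} above $C$. Concretely, let $u_1 < u_2 < \cdots$ denote the times $t \ge T_C$ with $X_t \in (0,C]$, and let $N$ be their total number before the chain first hits $0$. From each low visit $u_i$ the chain takes one step; this step either lands on $0$ (and the chain terminates), lands again in $(0,C]$ (the next low visit), or jumps above $C$, triggering an excursion of length $Y_i$ equal to the time needed to return to $\{x\le C\}$ (with $Y_i=0$ when no excursion occurs). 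Matching the time axis carefully, every unit of time between $T_C$ and $T$ is charged either to the single step leaving some low visit or to an excursion, which gives the exact decomposition $T - T_C = N + \sum_{i=1}^N Y_i$.

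Next I would bound $\Exp[N]$. By the first hypothesis, at every low visit the next step hits $0$ with probability at least $p_0$, independently of the past by the Markov property; since hitting $0$ terminates the chain, each low visit is the last one with probability at least $p_0$. Hence $\Pr[N \ge i+1 \mid N \ge i] \le 1-p_0$, so $N$ is stochastically dominated by a geometric variable and $\Exp[N] = \sum_{i\ge 1}\Pr[N\ge i] \le 1/p_0$.

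Then I would bound the total excursion time $\sum_{i=1}^N Y_i$ by a Wald-type argument. Conditioned on the history up to the $i$-th low visit, where the state is some $c \in (0,C]$, the next step jumps to $x>C$ with probability $\Pr[X_{t+1}=x\mid X_t=c]$ and the ensuing excursion has expected length $\Exp[T_C \mid X_0 = x]$ (using the strong Markov property; for the time-homogeneous chains of the applications this is exactly the return time). The second hypothesis therefore gives $\Exp[Y_i \mid \mathcal F_{u_i}] \le B$. Since the event $\{N \ge i\}$ is determined by the trajectory up to $u_i$ while $Y_i$ depends only on what follows, $\Exp[Y_i \mathbf{1}[N\ge i]] \le B\,\Pr[N\ge i]$, and summing over $i$ yields $\Exp\big[\sum_{i=1}^N Y_i\big] \le B\,\Exp[N] \le B/p_0$. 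Combining the two bounds gives $\Exp[T-T_C] \le (1+B)/p_0 = O(1)$, and together with $\Exp[T] \ge \Exp[T_C]$ this proves $\Exp[T]=\Exp[T_C]+O(1)$ (the statement being trivially true when $\Exp[T_C]=\infty$).

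The step I expect to be most delicate is the bookkeeping that underlies $T-T_C = N + \sum_{i=1}^N Y_i$: one must check that an excursion which \emph{ends} exactly at $0$ is correctly attributed to the last low visit, so that $N$ genuinely counts all visits to $(0,C]$ and the terminal step is charged to $N$, and that the interaction between the random index $N$ and the lengths $Y_i$ respects the filtration, making the Wald-type interchange of expectation and summation valid. A secondary point to address is the possible time-inhomogeneity hidden in the phrase ``for all $t\ge 0$'': one needs the excursion length from a state $x$ started at an arbitrary time to be bounded in expectation by $\Exp[T_C\mid X_0=x]$, which holds for the time-homogeneous chains relevant here.
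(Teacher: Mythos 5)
Your proof is correct and follows essentially the same route as the paper: after $T_C$, the chain makes a geometrically bounded number of visits to $(0,C]$ (success probability $p_0$ at each), with excursions above $C$ of expected length at most $B$ in between, giving $\Exp[T-T_C]\le (B+1)/p_0$. Your decomposition $T-T_C=N+\sum_{i=1}^N Y_i$ with the Wald-type argument is simply a more careful formalization of the paper's informal statement that ``in expectation at least every $(B+1)$st step'' there is a chance $p_0$ of hitting zero.
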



\begin{proof}
Let ${\cal S}_C := \{ s\in {\cal S}\mid s\le C\}$.
By the definition of $T_C$ the expectation of the number of steps until we reach a state $c\in {\cal S}_C$ is $\Exp[T_C]$. So assume now that we are in some nonzero state $c\in {\cal S}_C$. With probability at least $p_0$ we 
jump right into zero in the next step. Otherwise we  either end up in some (possibly different) state in  ${\cal S}_C$ or in some state $x > C$. In the latter case we need, due to our assumption, in expectation at most $B$ steps to return to a state in ${\cal S}_C$. We conclude that we have in expectation at least every $(B+1)$st step a chance of jumping to zero with probability at least $p_0$. As the expectation of a geometric random variable with success probability $p_0$ is $1/p_0$ this implies that, once we reach a state within ${\cal S}_C$ for the first time, the expectation for the number of additional steps to reach zero is bounded from above by $(B+1)/p_0$, as claimed.
\end{proof}

Consider a Markov chain on the non-negative integers which is defined as follows. From all states $x>0$  we move to one of the states $0,1,\ldots,x$ uniformly at random. How long does it take till we reach zero? Intuitively, we expect to half the distance towards zero in each step, so we expect that it will take $\log_2 n$ steps, if we start in $n$. What if we move uniformly to a state in $0,1,\ldots,ax$? For which $a>1$, if at all, do we have time $O(\log n)$ till we reach zero? We will give the (surprising) answer for this in the next section. The following theorem will turn out to be very useful. The clue is to consider a Markov chain in which the distance to zero is rescaled.

\begin{theorem}\label{thm:drift:t4}
Let $(X_t)_{t \in\N_0}$  be a Markov chain with state space $\S \subseteq \N_0$ and with $X_0=n$. Let $C\in\N_0$ be some positive constant and denote by $T_C$ the earliest point in time $t$ such
that $X_t \le C$. Assume furthermore that there exists a constant $c>0$ and a function $g: \S \rightarrow \R$ such that $g(x) = 0$ for all $x\le C$ and $g(x) > 0$ for all $x>C$ and such that for all $t\geq 0$
$$
\Exp[g(X_{t+1}) \mid  X_t= x]  \le g(x) - c\qquad\text{for all $x\in\N_0, x>C$.}
$$
Then
$\Exp[T_C] \leq g(n)/c$. 
\end{theorem}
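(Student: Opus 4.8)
The plan is to reduce the statement to Theorem~\ref{thm:drift:t1} by passing to the rescaled process $Y_t := g(X_t)$, exactly as the remark preceding the theorem suggests (``a Markov chain in which the distance to zero is rescaled''). Because $g(x)=0$ precisely when $x\le C$ and $g(x)>0$ otherwise, the event $\{Y_t=0\}$ coincides with $\{X_t\le C\}$; hence the first time $Y$ reaches $0$ is exactly $T_C$, and the initial value is $Y_0=g(n)$. Moreover $g$ takes values in $[0,\infty)$, so $(Y_t)$ lives on a subset of $[0,\infty)$ and the target state is $0$ — precisely the setting of Theorem~\ref{thm:drift:t1}. (The case $n\le C$ is trivial, since then $T_C=0$ and $g(n)=0$.)

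The core of the argument is that the hypothesis on $g$ is an \emph{additive} drift of size $c$ for $Y$. Indeed, for any state $x$ with $x>C$, i.e.\ $g(x)>0$, the Markov property of $X$ gives $\Exp[Y_{t+1}\mid X_t=x]=\Exp[g(X_{t+1})\mid X_t=x]\le g(x)-c$, which is the additive-drift condition~\eqref{eq:additivedrift} applied to $Y$. Feeding $Y_0=g(n)$ into the conclusion of Theorem~\ref{thm:drift:t1} would then yield $\Exp[T_C]\le g(n)/c$, as desired.

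The one point requiring care — and what I expect to be the main obstacle — is that $(Y_t)$ need not itself be a Markov chain: if $g$ is not injective, the law of $Y_{t+1}$ can depend on which preimage $x$ of $g(X_t)$ the chain currently occupies, and not merely on the value $Y_t$. Thus Theorem~\ref{thm:drift:t1} cannot be invoked as a literal black box. I would resolve this by observing that its proof never uses the full Markov property; it only uses the conditional inequality~\eqref{eq:additivedrift2}, namely $\Exp[Y_{t+1}\mid T_C>t]\le \Exp[Y_t\mid T_C>t]-c$. This inequality can be established directly: conditioning on $X_t$ via the tower property, the event $\{T_C>t\}$ forces $X_t>C$, so the uniform bound $\Exp[g(X_{t+1})\mid X_t=x]\le g(x)-c$ holds for every relevant $x$ and integrates to~\eqref{eq:additivedrift2}. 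Once~\eqref{eq:additivedrift2} is in hand, the remaining steps of the proof of Theorem~\ref{thm:drift:t1} — the telescoping bound in~\eqref{eq:drift:eq3} together with the identity $\Exp[T_C]=\sum_{t\ge 0}\Pr[T_C>t]$ and the nonnegativity of $g$ — carry over verbatim and give $\Exp[T_C]\le g(n)/c$.
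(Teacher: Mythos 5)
Your proof is correct and follows essentially the same route as the paper: the paper likewise sets $Y_t := g(X_t)$, verifies the additive drift for $Y_t$ (by averaging over the preimages $x'$ with $g(x')=g(x)$), and then invokes Theorem~\ref{thm:drift:t1}, using that the hitting time of $0$ for $(Y_t)$ coincides with $T_C$. If anything, you are more careful on the one delicate point: the paper simply declares $(Y_t)$ to be a Markov chain, whereas your observation that $(Y_t)$ need not be Markov and that one should instead re-derive inequality~\eqref{eq:additivedrift2} directly from the tower property (since the proof of Theorem~\ref{thm:drift:t1} uses nothing else) is the cleaner justification of that step.
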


\begin{proof}
Let $Y_t := g(X_t)$. Then the assumption of the theorem implies that for all $x>C$:
$$
\Exp[Y_{t+1}\mid Y_t = g(x)] = \sum_{x': g(x')=g(x)}\Pr[X_t =x' \mid Y_t = g(x)] \cdot \sum_{i>C} \Pr[X_{t+1}=i \mid X_t= x'] \cdot g(i) \le g(x) - c.
$$
We can thus apply Theorem~\ref{thm:drift:t1} to the Markov chain $(Y_t)$ to conclude that $\Exp[\widetilde{T}] \le g(n)/c$, where $\widetilde{T}$ is the time until the Markov chain $(Y_t)$ reaches zero. The observation that, by construction of the chain $(Y_t)$, we have  $\widetilde{T} \equiv T_C$  concludes the proof. 
\end{proof}

The next theorem is a simplified version of the previous one that is often easier to apply. The present formulation is from
Mitavskiy, Rowe, and Cannings~\cite{Mit-Row-Can:j:09}, the proof follows~\cite[Theorem~4.6]{Joh:th:10}.

\begin{theorem}\label{thm:drift:t2}
Let $(X_t)_{t \in\N_0}$ be a Markov chain with state space ${\cal S} \subseteq \{0\} \cup [1,\infty)$ and with $X_0=n$.  Let $T$ be the earliest point in time $t \geq 0$ such
that $X_t  = 0$. Suppose furthermore that there is a positive, increasing function $h: [1,\infty) \rightarrow \R_{> 0}$ such that for all $x \in \S$, $x> 0$ we have for all $t\geq 0$
$$
\Exp[X_{t+1}\mid X_t=x]\le  x-h(x).
$$
Then
$$
\Exp[T] \leq \frac{1}{h(1)}+\int_{1}^{n} \frac{1}{h(u)}du.
$$
\end{theorem}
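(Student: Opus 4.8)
The plan is to reduce to the additive drift theorem (Theorem~\ref{thm:drift:t1}) by rescaling the distance to zero, exactly in the spirit of the paragraph preceding Theorem~\ref{thm:drift:t4}. Concretely, I would introduce the potential
\[
g(0) := 0, \qquad g(x) := \frac{1}{h(1)} + \int_1^x \frac{1}{h(u)}\,du \quad\text{for } x \ge 1,
\]
and set $Y_t := g(X_t)$. Since $g$ is strictly increasing on $[1,\infty)$ with $g(x)>0$ there and $g(0)=0$, the map $g$ is injective on $\{0\}\cup[1,\infty)$; hence $(Y_t)$ is again a Markov chain, its state space lies in $[0,\infty)$, and $Y_t=0$ holds precisely when $X_t=0$, so the hitting time of $0$ for $(Y_t)$ coincides with $T$. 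Because the claimed bound is exactly $g(n)$, the goal reduces to verifying the additive drift condition $\Exp[Y_{t+1}\mid Y_t=y]\le y-1$, i.e.\ to showing $\Exp[g(X_{t+1})\mid X_t=x]\le g(x)-1$ for every $x\in\S$ with $x>0$.

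The heart of the argument is the pointwise linearization inequality
\[
g(x) - g(y) \;\ge\; \frac{x-y}{h(x)} \qquad \text{for all } x\ge 1 \text{ and all } y\in\{0\}\cup[1,\infty).
\]
For $y\ge 1$ this follows purely from monotonicity of $h$: writing $g(x)-g(y)=\int_y^x \tfrac{1}{h(u)}\,du$ and using that $h$ is increasing, the integrand is at least $1/h(x)$ on the relevant interval when $y\le x$ and at most $1/h(x)$ when $y>x$, which in both cases yields the bound. For the boundary case $y=0$ I would separately check $g(x)\ge x/h(x)$, which again follows from $h$ increasing, via $\int_1^x \tfrac{1}{h(u)}\,du \ge (x-1)/h(x)$ together with $1/h(1)\ge 1/h(x)$.

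Granting this inequality, the drift condition drops out immediately: substituting $y=X_{t+1}$ and conditioning on $X_t=x$ gives
\[
\Exp[g(X_{t+1})\mid X_t=x] \;\le\; g(x) - \frac{x}{h(x)} + \frac{\Exp[X_{t+1}\mid X_t=x]}{h(x)} \;\le\; g(x) - \frac{x}{h(x)} + \frac{x-h(x)}{h(x)} \;=\; g(x)-1,
\]
where the second step uses the hypothesis $\Exp[X_{t+1}\mid X_t=x]\le x-h(x)$. Applying Theorem~\ref{thm:drift:t1} to $(Y_t)$ with constant $c=1$ then gives $\Exp[T]\le Y_0 = g(n) = \tfrac{1}{h(1)} + \int_1^n \tfrac{1}{h(u)}\,du$, as desired. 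I expect the main subtlety to be the boundary behaviour at $0$: the potential must be concave-like on $[1,\infty)$ (so that a jump toward the origin is correctly ``charged'') while sending the absorbing state to $0$, and the $y=0$ case of the linearization inequality is exactly where one must check that the additive constant $1/h(1)$ in $g$ is large enough to absorb a jump all the way to zero.
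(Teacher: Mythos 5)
Your proposal is correct and follows essentially the same route as the paper's proof: the same potential $g(x) = \tfrac{1}{h(1)}+\int_1^x \tfrac{1}{h(u)}\,du$ with $g(0)=0$, the same linearization inequality $g(x)-g(y)\ge \tfrac{x-y}{h(x)}$ proved by the same case analysis on $y$, and the same reduction to Theorem~\ref{thm:drift:t1} with drift constant $c=1$. Your additional remark that injectivity of $g$ on $\{0\}\cup[1,\infty)$ makes $(Y_t)$ a Markov chain is a nice touch of rigor that the paper leaves implicit.
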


\begin{proof}
Let 
\[
g(x) := \frac{1}{h(1)}+\int_{1}^{x} \frac{1}{h(u)}du \quad\text{for $x\in\S, x>0$}\qquad\text{and $g(0):=0$.}
\]

We claim that $g(x)-g(y) \geq \frac{x-y}{h(x)}$ for all $x,y\in \S$ such that $x >0$. To see this, observe that since $h$ is increasing and positive,
$$
g(x)-g(y) = 
\begin{cases}
\int_{y}^{x}  \frac{1}{h(u)}du \ge (x-y) \cdot  \frac{1}{h(x)}&\text{if } x > y >0,\\
\frac{1}{h(1)} + \int_{1}^{x}  \frac{1}{h(u)}du \ge   \frac{x}{h(x)}&\text{if } x > y =0,\\
0&\text{if } x = y > 0,\\
-\int_{x}^{y}  \frac{1}{h(u)}du \ge -(y-x) \cdot  \frac{1}{h(x)}&\text{if } 0 <x < y.
\end{cases}
$$

Now let us consider the random variable $Y_t := g(X_t)$. The above inequality for $g$ implies that for all $x\in \S \setminus\{0\}$ we have
\[
\Exp[g(X_t)-g(X_{t+1})\mid X_t=x] \geq \Exp\left[\frac{X_t-X_{t+1}}{h(X_t)}\mid X_t = x\right] \geq 1,
\]
where  the last inequality is just a reformulation of the assumption of the theorem.

Hence, we have $\Exp[Y_{t+1}\mid Y_t=g(x)] \le g(x)-1$, and the claim of the theorem thus follows by applying Theorem~\ref{thm:drift:t1} to $Y_t := g(X_t)$.
\end{proof}

A special case of Theorem~\ref{thm:drift:t2} is the case $h(x) = \delta x$, which is called \emph{multiplicative drift}; we then get $\Exp[T]\le \delta^{-1}(1+\log n)$, see also~\cite{Doe-Joh-Win:j:12:multiDrift}. In fact, a straightforward application of Markov's inequality even supplies us with exponential tail bounds, as was first noticed in~\cite{DoerrGoldberg:j:11:adaptiveDrift}.

\begin{theorem}\label{thm:multiplicative}
Let $(X_t)_{t \in\N_0}$ be a Markov chain with state space $\S \subseteq \{0\} \cup [1,\infty)$ and with $X_0=n$. Let $T$ be the earliest point in time $t \geq 0$ such
that $X_t  = 0$. Assume that there is $\delta>0$ such that for all $x \in \S$, $x> 0$ and for all $t\geq 0$ we have
\[
\Exp[X_{t+1}\mid X_t=x]\le  (1-\delta)x.
\]
Then
\[
\Pr\left[T > \left\lceil \frac{\log n +k}{|\log(1-\delta)|} \right\rceil\right] \leq e^{-k}.
\]
\end{theorem}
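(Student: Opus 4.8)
The plan is to derive the tail bound from a single application of Markov's inequality, fed by an iterated form of the multiplicative drift hypothesis. First I would unfold the condition $\Exp[X_{t+1}\mid X_t=x]\le(1-\delta)x$ over time. Taking total expectations and using the tower property, $\Exp[X_{t+1}]=\Exp[\,\Exp[X_{t+1}\mid X_t]\,]\le(1-\delta)\Exp[X_t]$; note this also holds trivially at the absorbing state, since $X_t=0$ forces $X_{t+1}=0$. A straightforward induction on $t$ then gives $\Exp[X_t]\le(1-\delta)^t\,\Exp[X_0]=(1-\delta)^t n$.

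The second ingredient is a structural observation about the state space. Because $\S\subseteq\{0\}\cup[1,\infty)$ and the chain is absorbed at $0$ (recall the standing convention $X_{T+1}=X_{T+2}=\cdots=0$), the event $\{T>t\}$ coincides exactly with the event $\{X_t\ge 1\}$: the condition $T>t$ says $X_t\neq 0$, and every nonzero state is at least $1$, while conversely $X_t\ge 1$ forces $T>t$. This is the one step I would handle with a little care, as it is the only place where the gap between $0$ and $1$ in the state space is actually used.

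Combining the two ingredients, Markov's inequality yields $\Pr[T>t]=\Pr[X_t\ge 1]\le\Exp[X_t]\le(1-\delta)^t n$. It then remains only to substitute the stated value of $t$: writing $t=\lceil(\log n+k)/|\log(1-\delta)|\rceil$, we have $t\,|\log(1-\delta)|\ge\log n+k$, and since $(1-\delta)^t=e^{-t|\log(1-\delta)|}$ this gives $(1-\delta)^t n\le n\,e^{-(\log n+k)}=e^{-k}$, the claimed bound. I do not expect a genuine obstacle here; the argument is short, and the only points demanding attention are the equivalence $\{T>t\}=\{X_t\ge 1\}$ and the bookkeeping that the one-step drift inequality survives the passage to the absorbing state $0$, so that the induction for $\Exp[X_t]$ is valid for every $t$.
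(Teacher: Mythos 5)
Your proposal is correct and follows essentially the same route as the paper: iterate the drift to get $\Exp[X_t]\le(1-\delta)^t n$, identify $\{T>t\}$ with $\{X_t\ge 1\}$ via the gap in the state space $\S\subseteq\{0\}\cup[1,\infty)$, and conclude by Markov's inequality. The paper's proof is just a condensed version of yours, including the same remark that the boundary $1$ is the point where the state-space assumption is actually used.
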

\begin{proof}
Let $t := \lceil (\log n +k)/|\log(1-\delta)| \rceil$. Since $\Exp[X_{t}] \leq (1-\delta)^t X_0 \leq e^{-k}$, the claim follows from Markov's inequality: $\Pr[T > t] = \Pr[X_t \geq 1] \leq e^{-k}$. Note that here we used $\S \subseteq \{0\} \cup [1,\infty)$; replacing the (somewhat arbitrary) boundary $1$ by some other constant would give a multiplicative constant in front of the probability $e^{-k}$.
\end{proof}

An important implication of Theorem~\ref{thm:multiplicative} is that if the probability of making large jumps is geometrically bounded, then also an additive drift suffices to get exponential tail bounds. Moreover, it is exponentially unlikely to cross an area with \emph{negative} drift in less than exponentially many steps. This idea again goes back to Hajek~\cite{Haj:j:82} and was first proven in this simplified form in~\cite{Oli-Wit:j:11:negativeDrift,oliveto2012erratum}. However, that proof is still based on some results from Hajek's paper. Here we give a short and fully self-contained proof.  

\begin{theorem}\label{thm:tailbounds}
For all $a,b,\delta, \eps, \gamma, r  >0$, with $a<b$, there is $c>0$, $n_0 \in \N$ such that the following holds.
Let $(X_t)_{t \in\N_0}$  be a Markov chain, and let $n\geq n_0$ such that for all $t \geq 0$ the following conditions hold:
\begin{enumerate} 
\item $\Exp[X_{t+1}-X_t \mid X_t =x ] \leq - \eps \quad \text{ for all $x>an$}$,
\item $\Pr[|X_{t+1}-X_t| \geq j] \leq r(1+\delta)^{-j}$ for all $j \in \N_0$.
\end{enumerate}
Let $T_a := \min\{t \geq 0 : X_t \leq an \mid X_0 = bn\}$ and $T_b := \min\{t \geq 0 : X_t \geq bn \mid X_0 = an\}$. Then 
\begin{enumerate}
\item[(a)] $\Pr[T_a \geq \frac{(1+\gamma)(b-a)n}{\eps}] \leq  e^{-cn}$.
\item[(b)] $\Pr[T_b \leq e^{cn}] \leq e^{-cn}$.
\end{enumerate}
\end{theorem}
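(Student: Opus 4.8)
The plan is to handle both parts with a single device: the exponential potential $g(x)=e^{\lambda(x-an)}$ for a small parameter $\lambda>0$, which converts the additive negative drift above $an$ (Condition~1) into a \emph{multiplicative} drift for $g$, while the geometric tail on the step sizes (Condition~2) keeps all exponential moments finite and uniformly controlled. This is exactly the ``bounded jumps turn additive drift into multiplicative drift'' reduction flagged in the text, and part~(a) is morally an application of Theorem~\ref{thm:multiplicative} to $g(X_t)$; however, the state-space requirement there does not quite fit, so I would run a direct stopped-supermartingale argument instead.

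The first step, and the main technical obstacle, is a moment lemma. Writing $\Delta := X_{t+1}-X_t$, I would show there is $\lambda_0\in(0,\log(1+\delta))$ and a constant $c_0>0$ such that for every $0<\lambda\le\lambda_0$, every state $x$, and every $t$ one has $\Exp[e^{\lambda\Delta}\mid X_t=x]\le M$ for a constant $M=M(\lambda)$, and in addition, whenever $x>an$, $\Exp[e^{\lambda\Delta}\mid X_t=x]\le 1-\lambda\eps+c_0\lambda^2=:\rho$. Both $M<\infty$ and the second-order term $c_0\lambda^2$ come from expanding $e^{\lambda\Delta}=1+\lambda\Delta+\sum_{k\ge2}\lambda^k\Delta^k/k!$ and using Condition~2 to bound $\Exp[|\Delta|^k]$ (a geometric tail gives $\Exp[|\Delta|^k]\le k!\,\kappa^k$ for a constant $\kappa$, so the series converges for $\lambda<\log(1+\delta)$ and its tail is $O(\lambda^2)$); the first-order term uses $\Exp[\Delta\mid X_t=x]\le-\eps$ from Condition~1. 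In particular $\rho<1$ once $\lambda$ is small, and I fix such a $\lambda$. Getting these constants uniform in $x$ and $t$ is precisely what Condition~2 delivers.

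For part~(a) I would form the stopped exponential supermartingale $M_t:=e^{\lambda(X_{t\wedge T_a}-an)}\,\rho^{-(t\wedge T_a)}$. For $t<T_a$ we are above $an$, so the moment lemma gives $\Exp[e^{\lambda(X_{t+1}-an)}\mid\mathcal F_t]\le\rho\,e^{\lambda(X_t-an)}$, whence $(M_t)$ is a supermartingale and $\Exp[M_{t^*}]\le M_0=e^{\lambda(b-a)n}$, where $t^*=(1+\gamma)(b-a)n/\eps$. On the event $\{T_a>t^*\}$ one has $X_{t^*}>an$, so $M_{t^*}\ge\rho^{-t^*}$; combining these yields $\Pr[T_a>t^*]\le e^{\lambda(b-a)n}\rho^{t^*}$. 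Substituting $t^*$, using $\log\rho\le-\lambda\eps+c_0\lambda^2$, and collecting terms, the exponent equals $(b-a)n\,\lambda\,[-\gamma+O(\lambda)]$, which is $\le-cn$ for a suitable $c>0$ provided $\lambda$ is small relative to $\gamma$; this is where the slack $\gamma$ is spent.

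For part~(b) I would use a compensated version of the same potential. Since for $x\le an$ we only know $\Exp[g(X_{t+1})\mid X_t=x]\le M$ (no drift), but there $g(x)\le1$, the bound $\Exp[g(X_{t+1})\mid\mathcal F_t]\le\rho\,g(X_t)+M\cdot\mathbf 1[X_t\le an]$ holds in \emph{all} states. Hence $V_t:=g(X_t)-M\sum_{s=0}^{t-1}\mathbf 1[X_s\le an]$ is a supermartingale, so $\Exp[V_{t\wedge T_b}]\le V_0=g(an)=1$. Dropping the compensation term (at most $Mt$) gives $\Exp[g(X_{t\wedge T_b})]\le 1+Mt$; on $\{T_b\le t\}$ we have $g(X_{T_b})\ge e^{\lambda(b-a)n}$, so $\Pr[T_b\le t]\le(1+Mt)\,e^{-\lambda(b-a)n}$. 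Taking $t=e^{cn}$ and any $c<\lambda(b-a)/2$ makes the right-hand side at most $e^{-cn}$ for $n\ge n_0$. Finally I would take the minimum of the two constants from (a) and (b) (and the larger threshold $n_0$) to obtain a single $c,n_0$ valid for both claims. Beyond the moment lemma, the only care needed is the integrability required to stop the supermartingales at the bounded times $t^*$ resp.\ $t$, which again follows from the uniform bound $M$.
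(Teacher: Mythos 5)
Your proposal is correct, and it shares the paper's central device: an exponential transformation of the chain (the paper uses $Y_t = e^{\eta X_t}$, you use $g(x)=e^{\lambda(x-an)}$, which differ only by a constant factor), together with the observation that the geometric jump bound (Condition~2) turns the additive drift into a uniform bound $\Exp[e^{\lambda\Delta}\mid X_t=x]\le 1-\lambda\eps+O(\lambda^2)<1$ above $an$. Your derivation of this bound by Taylor expansion and moment estimates is interchangeable with the paper's computation, which truncates the sum at a threshold $j_0$ and controls the two tails separately. Where you genuinely diverge is in how the drift estimate is exploited. For part~(a) the paper invokes its Theorem~\ref{thm:multiplicative} applied to $Y_t$ (glossing over the fact that $Y_t$ has drift only while $X_t>an$ and never hits $0$, so the hypotheses of that theorem do not literally apply); your stopped supermartingale $M_t=e^{\lambda(X_{t\wedge T_a}-an)}\rho^{-(t\wedge T_a)}$ handles this cleanly and is the rigorous form of the same idea — you were right to flag the state-space mismatch. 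For part~(b) the difference is more substantial: the paper conditions on an event $\mathcal{E}$ excluding huge upward jumps from below $an$, decomposes the trajectory into excursions above $an$, and applies Markov's inequality plus two union bounds over time; your compensated supermartingale $V_t=g(X_t)-M\sum_{s<t}\mathbf{1}[X_s\le an]$ gives $\Exp[g(X_{t\wedge T_b})]\le 1+Mt$ in one stroke and avoids the excursion bookkeeping entirely. This is a cleaner and arguably more robust argument (no case analysis, no conditioning), at the cost of requiring familiarity with optional stopping; the paper's route stays entirely within Markov's inequality and union bounds, consistent with its stated aim of elementary self-containment. Minor quibble: with the moment bound $\Exp[|\Delta|^k]\le Ck!\kappa^k$, the exponential series converges for $\lambda<1/\kappa$ rather than $\lambda<\log(1+\delta)$ as you wrote, but since you only need some sufficiently small $\lambda$, this is immaterial.
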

\begin{proof}
We may assume $\gamma < 1/2$. Let $0<\eta <1$ be so small that $e^\eta < (1+\delta)^{1/2}$ holds. We claim that if $\eta$ is sufficiently small, then we can choose $j_0 >0$ such that $e^{\eta j} = 1+\eta j  \pm \gamma\eta\eps/6$ is satisfied for all $-j_0 \leq j \leq j_0$ (where we use $x=y\pm \eps$ as shorthand for $x\in [y-\eps,y+\eps]$), and such that at the same time
\begin{equation}\label{eq:tailofsum}
\sum_{j=j_0+1}^{\infty} r(1+\delta)^{-j/2} < \gamma\eta\eps/24 \qquad \text{ and } \qquad \sum_{j=j_0+1}^{\infty} jr(1+\delta)^{-j} < \gamma\eta \eps/24
\end{equation}
holds. Indeed, by setting (rather arbitrarily) $j_0 = \eta^{-1/3}$ we satisfy all three conditions if $\eta = \eta(\delta,\eps,\gamma,r)$ is small enough.


Consider $Y_t := e^{\eta X_t}$. For any $x > an$ we have
\begin{align*}
\Exp[Y_{t+1}-Y_{t} \mid X_t =x]  & = e^{\eta x}\Exp[e^{\eta (X_{t+1}-x)}-1 \mid X_t =x]\\
& = e^{\eta x} \bigg(\sum_{j=-j_0}^{j_0} \Pr[X_{t+1}-X_t = j \mid X_t = x]  \underbrace{(e^{\eta j}-1)}_{=\eta j  \pm \gamma\eta\eps/6} \pm\, 2\cdot \underbrace{\sum_{j=j_0+1}^{\infty}  r(1+\delta)^{-j} (e^{\eta j}+1)}_{\leq \gamma\eta\eps/12 \text{ by \eqref{eq:tailofsum}}} \bigg) \\
& = e^{\eta x} \bigg(\sum_{j=-j_0}^{j_0} \Pr[X_{t+1}-X_t = j \mid X_t = x]\eta j \pm  \gamma\eta\eps/3 \bigg) \\
& \stackrel{\eqref{eq:tailofsum}, \text{Cond.2}}{=} e^{\eta x}(\Exp[X_{t+1}-X_t \mid X_t = x]\eta \pm \gamma\eta\eps/2)  <-e^{\eta x}\underbrace{\eta\eps(1-\gamma/2)}_{=:\nu}.
\end{align*}
Recalling that $Y_t = e^{\eta x}$ for $X_t =x$, we thus find that $Y_t$ has a multiplicative drift with factor $1-\nu \leq 1-\tfrac98\eta\eps/(1+\gamma)$. The first statement now follows by applying Theorem~\ref{thm:multiplicative} to $Y_t$. For the second statement, consider the event $\mathcal{E}$ that for the first $e^{cn}$ points in time where $X_t \leq an$ we have $X_{t+1} \leq X_t+ (b-a)n/2$. Note that $\Pr[\neg \mathcal{E}] \le e^{cn}\cdot r(1-\delta)^{n(b-a)/2} = e^{-\Omega(n)}$, if we choose $c>0$ sufficiently small. From now on we may thus condition on $\mathcal{E}$. Consider some time $t_1$ at which we cross the lower bound from below, i.e., $X_{t_1-1} \leq an$ and $X_{t_1} > an$. Let $t_2$ be the next point in time where we drop back below $an$, so $t_2 = \min\{t > t_1 \mid X_{t_2} \leq an\}$. Note that the event $\mathcal{E}$ implies that for all $t_1 < e^{cn}$ we have $X_{t_1} < n(a+b)/2$. We now show that it is very unlikely that we reach $b$ between $t_1$ and $\max\{t_2,e^{cn}\}$. In fact, this can only happen if $Y_t \geq e^{\eta bn}$. Thus, using the multiplicative drift of $Y_t$ and Markov's inequality, for all $t_1 < t < \max\{t_2,e^{cn}\}$ and $an< x \le (a+b)n/2$,
\[
\Pr[X_t > bn \mid X_{t_1} = x ] \leq \frac{\Exp[Y_t \mid Y_{t_1} = e^{\eta x}]}{e^{\eta bn}} \leq \frac{(1-\gamma)^{t-t_1}e^{\eta y}}{e^{\eta bn}} \leq (1-\gamma)^{t-t_1}e^{-\eta (b-a)n/2}.
\]
A union bound over all $t_1< t < \max\{t_2,e^{cn}\}$ shows
that (conditioned on $\mathcal{E}$) the probability that $bn$ is reached in the interval $[t_1,t_2]$ is $e^{-\Omega(n)}$. A second union bound over all $t_1 < e^{cn}$ thus shows the second claim for $c$  sufficiently small. 
\end{proof}

\section{Example: Random Decline}


With Theorems~\ref{thm:drift:t2} and~\ref{thm:multiplicative} at hand we can now analyse the Markov chain that, from state $x$, moves to a state uniformly at random within $\{0,\ldots,\lfloor ax\rfloor \}$. Clearly, $\Exp[X_{t+1}\mid X_t=x] \leq {\textstyle \frac{a}2 x}$
and we can apply Theorem~\ref{thm:multiplicative} to deduce that the number of steps $T$ until we reach zero is $O(\log n)$ for all $a<2$. For $a>2$ it may seem that    $\Exp[T] = \infty$, as we have a drift to the right. Perhaps surprisingly, this is not true. In fact, we have 
the following theorem.
\begin{theorem}\label{thm:randomdecline}
For the Markov chain defined above with (constant) parameter $a$, we have $\Exp[T] = O(\log n)$ if and only if $a < e$, where $e=2.718..$ is the Euler constant.
\end{theorem}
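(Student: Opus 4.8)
The plan is to pass to the logarithmic scale, where the multiplicative dynamics become additive. If $X_t=x$ is large and $m:=\lfloor ax\rfloor$, then $X_{t+1}$ is uniform on $\{0,1,\dots,m\}$ and Stirling's formula gives
\[
\Exp[\log X_{t+1}\mid X_t=x]=\frac{1}{m+1}\sum_{k=1}^{m}\log k=\frac{\log(m!)}{m+1}=\log m-1+o(1)=\log x+(\log a-1)+o(1).
\]
Thus the additive drift of $\log X_t$ is governed by the sign of $\log a-1=\log(a/e)$, and this is exactly where the threshold $a=e$ enters: contraction towards $0$ for $a<e$, and non-negative drift for $a\ge e$.

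\paragraph{Upper bound ($a<e$).}
I would apply Theorem~\ref{thm:drift:t4} with the rescaled potential $g(x):=\log(x/C)$ for $x>C$ and $g(x):=0$ for $x\le C$, where $C$ is a large integer constant. A short Stirling computation (the terms $k\le C$ contribute $0$, the remaining sum is $\log(m!)$ up to lower-order terms) shows that for all $x>C$
\[
\Exp[g(X_{t+1})\mid X_t=x]-g(x)=\log\!\frac{\lfloor ax\rfloor}{x}-1+o(1)\le \log a-1+o(1)\le \tfrac12(\log a-1)<0,
\]
once $C$ is chosen large enough to absorb the $o(1)$ error. Theorem~\ref{thm:drift:t4} then yields $\Exp[T_C]\le g(n)/c=O(\log n)$ with $c=\tfrac12(1-\log a)$. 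Finally, from any state in $\{1,\dots,C\}$ we hit $0$ in one step with probability at least $1/(\lfloor aC\rfloor+1)$, and from such a state the chain can only jump to states $\le\lfloor aC\rfloor$, each of which returns to $\{0,\dots,C\}$ in expected $O(1)$ time; hence Theorem~\ref{thm:drift:t3} gives $\Exp[T]=\Exp[T_C]+O(1)=O(\log n)$.

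\paragraph{Lower bound ($a>e$).}
Here I would exhibit a supermartingale certifying transience. Set $\phi(x):=x^{-s}$ for $x\ge 1$ and $\phi(0):=1$, with $s>0$ to be fixed. Using $\sum_{k=1}^m k^{-s}=\frac{m^{1-s}}{1-s}(1+o(1))$ one checks that for large $x$
\[
\Exp[\phi(X_{t+1})\mid X_t=x]=\frac{1}{m+1}\Big(1+\sum_{k=1}^{m}k^{-s}\Big)=\frac{a^{-s}}{1-s}\,\phi(x)\,(1+o(1)),
\]
and the factor $\tfrac{a^{-s}}{1-s}$ is strictly below $1$ for all sufficiently small $s>0$ precisely because $\tfrac{d}{ds}\log\tfrac{a^{-s}}{1-s}\big|_{s=0}=1-\log a<0$ when $a>e$. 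Fixing such an $s$ and a constant $C_0$ above which the inequality $\Exp[\phi(X_{t+1})\mid X_t=x]\le\phi(x)$ holds, optional stopping at the first time $\tau$ the chain enters $\{0,\dots,C_0\}$ gives $C_0^{-s}\,\Pr[\tau\le t]\le\Exp[\phi(X_{\tau\wedge t})]\le\phi(n)=n^{-s}$, so $\Pr[\tau<\infty]\le (C_0/n)^{s}<1$ for $n>C_0$. Hence with positive probability the chain never reaches $\{0,\dots,C_0\}$, so $\Pr[T=\infty]>0$ and $\Exp[T]=\infty$, which is certainly not $O(\log n)$.

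\paragraph{The main obstacle ($a=e$).}
The hard part is the boundary case, where the leading-order log-drift vanishes. Then the floor and the second-order Stirling term make $\log X_t$ only a slight supermartingale with drift $-\Theta((\log x)/x)$, and the transience certificate breaks down, since $\tfrac{e^{-s}}{1-s}>1$ for every $s>0$. What survives is that the increments of $Z_t:=\log X_t$ have mean $\approx 0$ and variance $\Theta(1)$: a step multiplies $X_t$ by roughly $a\cdot U$ with $U$ uniform on $(0,1)$, so $Z_{t+1}-Z_t\approx \log a+\log U$ has mean $\log(a/e)=0$ and unit variance. The descent of $Z_t$ from $\log n$ to $O(1)$ is therefore essentially diffusive and should cost $\Omega((\log n)^2)=\omega(\log n)$ steps. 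I expect this to be the crux: the downward increments of $Z_t$ are unbounded (a single step can collapse $X_t$ to a tiny value), so one cannot simply invoke a bounded-difference concentration bound. The clean route is to stop the process on entering the low region $\{0,\dots,C_0\}$, control the rare large down-jumps through their exponential tails (for $s<\log X_t$ one has $\Pr[Z_{t+1}-Z_t\le -s]\approx e^{-s}/a$), and then run a second-moment estimate on the stopped log-process to show that reaching the low region within $o((\log n)^2)$ steps has vanishing probability.
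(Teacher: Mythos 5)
There are two genuine problems with your proposal, one fixable and one that leaves the theorem unproven. First, the upper bound: your potential $g(x)=\log(x/C)$ does \emph{not} satisfy the hypothesis of Theorem~\ref{thm:drift:t4}, because the error in your Stirling computation is not uniformly $o(1)$ over $x>C$. The point is that landing in $\{0,\dots,C\}$, which happens with probability $(C+1)/(\lfloor ax\rfloor+1)\approx C/(ax)$, earns no credit under your normalization. Carrying out the computation for $x$ just above $C$ gives
\[
\Exp[g(X_{t+1})\mid X_t=x]-g(x)\;=\;\log a-1+\tfrac1a+o_C(1),
\]
and $\log a-1+\tfrac1a>0$ for \emph{every} $a>1$ (the function $a\mapsto\log a-1+1/a$ vanishes at $a=1$ and is increasing for $a>1$). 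So your potential has positive drift in a constant-factor neighbourhood of $C$ no matter how large $C$ is, and Theorem~\ref{thm:drift:t4} cannot be applied. The paper's unnormalized choice $g(x)=\log x$ avoids this: the drifts of the two potentials differ by exactly $\log C\cdot\Pr[X_{t+1}\le C]\approx(\log C)/a$ near $x=C$, i.e.\ the paper's potential is rewarded by a drop of $\ge\log C$ when the chain jumps below $C$, and this makes its drift at most $\log a-1+O(1/C)$ uniformly. Your argument is repaired by this one change, but as written the step fails.

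Second, and more fundamentally: the statement is an ``if and only if'', so you must show $\Exp[T]\neq O(\log n)$ at $a=e$, and this case you only sketch. Your transience certificate $\phi(x)=x^{-s}$ is correct for $a>e$ (and is a nice, self-contained alternative to the paper, which instead deduces $a>e$ from $a=e$ by stochastic domination), but as you note it dies at $a=e$ since $e^{-s}/(1-s)>1$ for all $s>0$. The diffusive $\Omega((\log n)^2)$ heuristic is plausible, but the work you defer -- controlling the unbounded downward jumps of $\log X_t$ and the conditioning induced by stopping -- is precisely the hard part, so the ``only if'' direction remains open in your write-up. The paper's route is quite different and avoids any variance estimate: assuming $\Exp[T]\le D\log n$, it applies the \emph{reverse} additive drift bound (remark after Theorem~\ref{thm:drift:t1}) to the potential $Y_t=2D\log(X_t/D^2)$ (suitably truncated below $D^2$), for which the drift is $\ge-1$; this forces $\Exp[Y_t]\ge D\log(n)/2$ for all large $t$. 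Since $X_t\le ne^t$ deterministically, $\Exp[Y_t\mid Y_t>0]\le 2D\log(ne^t/D^2)$, whence $\Pr[T>t]\ge\frac{\log n}{4(t+\log(n/D^2))}$; summing over $t$ gives $\Exp[T]=\infty$, contradicting the assumption. You would need either to carry out your second-moment argument in full or to adopt an argument of this type to close the gap.
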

\begin{proof}
Assume $a<e$. Let $g(x) = \log(x)$ for $x>C$  (where $C$ is some constant depending on $a$ that we will fix below)  and $g(x) = 0$ for all $x\le C$. Then for all $x>C$,
\begin{align*}
\sum_{m> C} \Pr[X_{t+1}=m \mid X_t= x] \cdot g(m)  
&= \frac1{\lfloor ax\rfloor+1} \sum_{m=C+1}^{\lfloor ax\rfloor} \log(m) \leq \log(x) -1 + \log(a) + \frac1{\lfloor aC\rfloor+1}.
\end{align*}
As $\log(a) < 1$ for all $a< e$ we can thus find for each  $a<e$ a constant $C$ so that we can apply Theorem~\ref{thm:drift:t4} to deduce that  $\Exp[T_C]\le O(\log n)$.
Theorem~\ref{thm:drift:t3} then implies that we also have $\Exp[T] = O(\log n)$. Indeed, the assumptions of  Theorem~\ref{thm:drift:t3} are easily seen to be fulfilled for $p_0 = 1/(C+1)$ and $B = \max_{C+1\le n \le aC} \Exp[T_C\mid X_0=n]$ (which is finite, as we maximize over a finite number of expectations and we have already shown that these expectations are finite).

For the other direction, we restrict ourselves for concreteness to $a=e$. (The case $a>e$ stochastically dominates the case $a=e$, and thus the statement for $a>e$ follows easily by a coupling argument.) 
For the sake of contradiction we assume that $\Exp[T] = O(\log n)$, so assume that there is $D>0$ such that $\Exp[T] \leq D \log n$ for all $n$. This implies in particular that $\Exp[T]$ is finite. Note that we may assume $D \geq 2$, and that $D$ is an integer. Let $g(x) = 2D\log(x/D^2)$ for $x>D^2$, $g(x) = 1$ for $1 \leq x \leq D^2$, and $g(x)=0$ for $x=0$. We set $Y_t := g(X_t)$, where $X_t$ is the state in the $t$-th step. Note that $X_t =0$ if and only if $Y_t=0$. We claim that the reverse of Condition~\eqref{eq:additivedrift} in Theorem~\ref{thm:drift:t1} is satisfied for $Y_t$ for $c=1$. To see this consider some $y >0$ of the form $g(x)$, where $x \in \N^+$.  We want to show that $\Exp[Y_{t+1}\mid Y_t= y] \geq y-1$. For $x \leq D^2$ this is trivial since then $y = g(x) \leq 1$. So assume that $x > D^2$. We compute similarly as above (using that $s\cdot \log(s/(eD^2)$ is an antiderivative of $\log(s/D^2)$ and that $\lfloor ex \rfloor \geq ex-1$ implies $\log(\lfloor ex \rfloor/(eD^2)) \geq \log (x/D^2) - 2/(ex)$) that
\begin{align*}
\Exp[Y_{t+1}\mid Y_t= y]
& = \Exp[g(X_{t+1})\mid X_t= x]  
\geq \frac1{\lfloor ex\rfloor +1} \sum_{m=D^2}^{\lfloor ex\rfloor} (2D\log(m/D^2))\; \\
 &\ge \frac{\lfloor ex\rfloor}{\lfloor ex\rfloor+1}\left(2D\log\left(\frac{\lfloor ex \rfloor}{eD^2}\right)+\frac{2D^3}{\lfloor ex\rfloor} \right)\\
 & \geq 2D\log(x/D^2) -1 \;=\;y-1.
\end{align*}
Since this inequality is true for all $y = g(x)>0$, $x\in \N^+$, we may use the reverse version of Theorem~\ref{thm:drift:t1}, cf.\!  the remark after the proof of Theorem~\ref{thm:drift:t1}. 
Hence, for $n > D^2$,
\[
\Exp[T] \geq Y_0- \liminf_{t\to\infty} \Exp[Y_t] = 2D\log(n/D^2)- \liminf_{t\to\infty} \Exp[Y_t].
\]
Since we assumed $\Exp[T] \leq D \log n$, the last inequality implies for all sufficiently large $n$ and $t$
\begin{equation*}
\Exp[Y_t] \geq D\log(n)/2.
\end{equation*}
Let $p_t := \Pr[Y_t >0] = \Pr[T > t]$, and let $E_t := \Exp[Y_t \mid Y_t >0]$. Then $p_tE_t = \Exp[Y_t] \geq D\log(n)/2$ for sufficiently large $n$ and $t$. On the other hand, since $X_t$ can increase in each step by at most a factor of $e$, we can bound $X_t \leq ne^t$, and thus $E_t \leq 2D\log(ne^t/D^2)$. Combining both bounds, we obtain
\[
p_t \geq \frac{D\log n}{2 E_t} \geq \frac{D\log n}{4D\log(ne^t/D^2)} = \frac{D\log n}{4Dt +4D\log(n/D^2)}.
\]
However, this implies that
\[
\Exp[T] = \sum_{t\geq 0} p_t = \infty,
\] 
contradicting that $\Exp[T]$ is finite. Thus the assumption $\Exp[T] = O(\log n)$  for $a=e$ was false.
\end{proof}

\section{Evolutionary Algorithm}\label{sec:EA}

In this section we (re)prove some fundamental properties of the following classical algorithm~\cite{back1996evolutionary} for optimizing a \emph{fitness function} $f:\{0,1\}^n \to \R$, which we call \emph{evolutionary algorithm} or \emph{EA}\footnote{In the evolutionary algorithms community it is called (1+1)-Evolutionary Algorithm.}. The algorithm starts with a 
search point $\xi^0 \in \{0,1\}^n$  that we usually assume to be chosen uniformly at random. It then proceeds in rounds. In each round it flips each bit of the current search point $\xi$ with probability (or \emph{mutation rate}) $c/n$, where $c>0$ is the \emph{mutation parameter}. The thus created \emph{offspring} $\xi'$ replaces $\xi$ if and only if $f(\xi') \geq f(\xi)$. We are interested in the \emph{optimization time} of $f$, i.e., in the number of rounds until a (global) maximum of $f$ is found. Note that we may assume that the algorithm has access to the function~$f$ only via an oracle. That is, the algorithm can query the function value $f(\xi)$ for a given $\xi$, but has no further knowledge about the function. In particular, the algorithm thus has no chance to `know' which $x$ maximizes the function before having queried all $2^n$ values. This is the reason for the above definition of the optimization time: we count the number of rounds until the algorithm first queries a global maximum. From then on the algorithm may continue asking queries, but clearly, the function value will never change again.  

Throughout this section, we will use the following notation. For all $t \geq 0$, we denote by $\xi^{t} \in \{0,1\}^n$ the search point after $t$ rounds of the algorithm. For any non-empty set of indices $I \subseteq [n]$, let $d(I,t) := |\{i \in I \mid \xi_i^{t} = 0\}|/ |I|$ be the density of zero bits in the $I$-substring of $\xi^{t}$. We write $d(I)$ instead of $d(I,t)$ if $t$ is clear from the context. 

By the \emph{no free lunch} principle, for arbitrary functions $f$ there is no hope that the algorithm will be fast. For example, for the function $f(\xi)$ that is one for $\xi=x_0$ and zero otherwise, \emph{any} algorithm will have exponential optimization time whp\footnote{\emph{with high probability}, i.e., with probability tending to one as $n\to \infty$}. Nevertheless algorithm EA gives good results in many practical applications, indicating the algorithm should be faster for a suitably restricted class of functions. In this section, we will study two classes of functions: \emph{strictly monotone functions}, that is, functions $f$ that satisfy $f(\xi) > f(\xi')$ for all $\xi,\xi'$ so that $\xi\not=\xi'$ and $\xi_i\ge \xi_i'$ for all $1\le i\le n$; and \emph{linear functions}, i.e., functions of the form $f(\xi) = \sum_{i=1}^n a_i\xi_i$ with weights $a_1,\ldots,a_n \in \R$. To avoid trivialities, we will always assume that all $a_i$ are non-zero. Moreover, by symmetry of the EA we may even assume that the weights are sorted and positive, $a_1\geq \ldots \geq a_n >0$. Note that with these assumptions every linear function is in particular strictly monotone.

For strictly monotone functions it easily follows from the drift theorems in Section~\ref{drift:theorems} that the optimization time of algorithm EA is $\Theta(n\log n)$ for all mutation parameters $c <1$, see also~\cite{doerr2013mutation}.

\begin{theorem}\label{thm:clessthan1}
For every constant $c<1$ and all strictly monotone functions $f$, the optimization time of  algorithm EA with mutation rate $c/n$ is $\Theta(n\log n)$ in expectation and whp.
\end{theorem}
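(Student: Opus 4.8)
The plan is to track throughout the single potential $Z_t := |\{i \in [n] : \xi^t_i = 0\}|$, the number of zero-bits of the current search point. Since $f$ is strictly monotone its unique maximum is the all-ones string, so the optimization time $T$ is exactly the first time $t$ with $Z_t = 0$, and I may treat the problem as a hitting-time problem for $Z_t$. For the upper bound I will establish a multiplicative drift $\Exp[Z_{t+1}\mid \xi^t]\le (1-\delta)Z_t$ with $\delta = c(1-c)/n$ and feed it into the multiplicative-drift special case of Theorem~\ref{thm:drift:t2} (for the expectation) and into Theorem~\ref{thm:multiplicative} (for the whp statement). For the lower bound I will use a coupon-collector argument. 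Although $Z_t$ need not itself be Markov, the drift bound I derive holds conditionally on the full state $\xi^t$ and uniformly over all $f$, which is all that the proofs of those theorems actually use.

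For the drift, fix a state with $Z_t = z\ge 1$ and let $A$ be the number of zero-bits the mutation flips to one and $B$ the number of one-bits it flips to zero; then $A\sim\Bin(z,c/n)$ and $B\sim\Bin(n-z,c/n)$ are \emph{independent}, and whenever the offspring is accepted $Z_{t+1}-Z_t = B-A$, so $\Exp[Z_{t+1}-Z_t\mid \xi^t]=\Exp[(B-A)\mathbf 1_{\mathrm{acc}}]$. Two elementary consequences of strict monotonicity drive the estimate: if $B=0$ the offspring dominates the parent and is always accepted, whereas if $B\ge 1$ then acceptance forces $A\ge 1$ (else the offspring is dominated and strictly worse). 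The first fact yields the ``good'' term exactly, $-\Exp[A\,\mathbf 1_{B=0}] = -z\tfrac cn\,(1-\tfrac cn)^{\,n-z}$, while the second lets me discard the acceptance indicator in the ``bad'' term and use that on $\{A\ge 1\}$ one has $(B-A)^+\le (B-1)^+$, giving $\Exp[(B-A)\mathbf 1_{\mathrm{acc}}\mathbf 1_{B\ge 1}]\le \Exp[(B-1)^+]\,\Pr[A\ge 1]$.

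The key point is a cancellation: $\Exp[(B-1)^+]=(n-z)\tfrac cn-\big(1-(1-\tfrac cn)^{\,n-z}\big)\ge 0$ carries the \emph{same} factor $(1-\tfrac cn)^{n-z}$ as the good term, and $\Pr[A\ge 1]\le z\tfrac cn$. Using $\Exp[(B-1)^+]\ge 0$ to multiply by the larger bound $z\tfrac cn$, the two occurrences of $(1-\tfrac cn)^{n-z}$ cancel and I expect the clean estimate $\Exp[Z_{t+1}-Z_t\mid\xi^t]\le z\tfrac cn\big((n-z)\tfrac cn-1\big)\le -\tfrac{c(1-c)}{n}\,z$, valid for every $z\ge 1$ since $(n-z)\tfrac cn\le c$. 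This is multiplicative drift with rate $\delta=c(1-c)/n$, so Theorem~\ref{thm:drift:t2} gives $\Exp[T]\le \delta^{-1}(1+\log n)=O(n\log n)$, and Theorem~\ref{thm:multiplicative} with $k=\Theta(\log n)$ gives $T=O(n\log n)$ whp. For the matching lower bound I use that a uniform random $\xi^0$ has at least $n/3$ zero-bits whp (Chernoff), and that a bit never selected for flipping keeps its initial value for \emph{any} $f$. Over $t=\tfrac{1-\eps}{c}\,n\ln n$ rounds a fixed bit is never flipped with probability $(1-\tfrac cn)^t\ge n^{-(1-\eps)}(1-o(1))$, and these events are independent across bits; hence the chance that all $\ge n/3$ initially-zero bits are flipped at least once is at most $\big(1-n^{-(1-\eps)}\big)^{n/3}\le \exp(-\tfrac13 n^{\eps})=o(1)$. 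Thus some bit is still zero after $t$ rounds whp, giving $T=\Omega(n\log n)$ whp and, since $\Pr[T>t]\to 1$, also in expectation.

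The main obstacle is the ``bad'' term: for a general strictly monotone $f$ the selection step is effectively adversarial, so one cannot exclude accepting moves that flip several one-bits back to zero, and a naive bound $\Exp[B\,\mathbf 1_{A\ge 1}]$ on the expected one-bit loss only yields the weaker threshold $c e^{c}<1$ (i.e.\ $c\lesssim 0.567$). The resolution is the observation that every accepted zero-increasing move must also flip at least one zero-bit to one, which simultaneously forces $A\ge 1$ and, via $(B-A)^+\le(B-1)^+$, replaces $\Exp[B]$ by $\Exp[(B-1)^+]$. That single subtracted unit is exactly what produces the factor $(n-z)\tfrac cn-1\le c-1$ and hence sharpens the threshold to the optimal $c<1$.
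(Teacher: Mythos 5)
Your proof is correct and follows essentially the same route as the paper's: both track the number of zero bits, use the same three monotonicity facts (an offspring flipping only one-bits is rejected, one flipping only zero-bits is accepted, and any accepted offspring that flips a one-bit must also flip a zero-bit), establish multiplicative drift, and finish via Theorem~\ref{thm:drift:t2} and Theorem~\ref{thm:multiplicative} plus a coupon-collector lower bound. The only difference is bookkeeping: where the paper lower-bounds the probability of flipping some zero bit via $\Pr[\Bin(n,p)>0]\ge np/(1+np)$ and obtains drift $xc(1-c)/(n(1+c))$, your $(B-1)^+$ cancellation combined with $\Pr[A\ge 1]\le \Exp[A]$ avoids that inequality and yields the marginally sharper constant $c(1-c)/n$.
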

\begin{proof}
For the lower bound, simply observe that whp the initial string has at least $n/3$ zero bits, and that each of these bits needs to be flipped at least once to reach the optimum. Therefore, we need to flip at least $\Omega(n\log n)$ bits in expectation and whp (as this is essentially a coupon collector process). Thus we also need $\Omega(n\log n)$ rounds to achieve so many flips. We omit the details.

For the upper bound, let $X_{t} := |\{i \in [n]: \xi_i^{t}=0\}|$ be the number of zero bits in the $t$-th step of the algorithm. We want to bound $\Exp[X_{t+1}\mid X_{t} = x]$. Assume we flip $r$ zero bits to a one and $s$ one bits to a zero. If $r=0$, then we reject the offspring~$\xi'$ and thus $X_{t+1} = X_{t}$.
If $r>0$ and $s=0$ then we accept $\xi'$ and thus $X_{t+1} \le X_{t} -1$, with room to spare. For the remaining case $r>0$ and $s\ge 1$ we use the bound $X_{t+1} \le X_{t} +s -1$, which is true regardless whether we accept or reject~$\xi'$. 

Denote by $p_{r,0}$ the probability that $r=0$. Then the above observations imply
\begin{align*}
\Exp[X_{t+1}\mid X_{t} = x]&\le p_{r,0}x + (1-p_{r,0})(x-1+\Exp[s]) = x -  (1-p_{r,0})(1-\Exp[s] ).
\end{align*}
Note that $p_{r,0}$ is the probability that a binomially distributed random variable with parameters $x$ and $p=c/n$ is equal to zero. Here the following inequality about the binomial distribution is helpful:
\begin{equation}\label{eq:drift:binom}
\Pr[\Bin(n,p) > 0] \ge \frac{np}{1+np}\qquad\text{for all $n\in\N_0$ and $0\le p\le 1$}.
\end{equation}
(Easy to check by induction on $n$.) With $(\ref{eq:drift:binom})$ and the observation that $\Exp[s] = (n-x)\cdot (c/n)\le c$ we deduce
\begin{align*}
\Exp[X_{t+1}\mid X_{t} = x]&\le x - \frac{xc/n}{1+xc/n}(1-c) \le x - \frac{xc(1-c)}{n(1+c)}
\end{align*}
and we can apply Theorem~\ref{thm:drift:t2} with $h(x)= xc(1-c)/(n(1+c))$ in order to deduce that $\Exp[T] = O(n\log n)$. Moreover, by Theorem~\ref{thm:multiplicative} (with $\delta = c(1-c)/(n(1+c))$) the same bound holds whp.
\end{proof}

We will see in the next section that the restriction $c<1$ is essential: for $c\ge 2.2$ the running time is not $O(n\log n)$, but actually exponential in $n$. Before we proceed to that proof we state a lemma on linear functions that will turn out to be very useful in Section~\ref{sec:monotone} as well as Section~\ref{sec:linear}. 

%

\begin{lemma}\label{lem:lindensity}
Consider the algorithm EA on a linear  function, $f(x) = \sum_{i=1}^n a_ix_i$, and assume that $a_i \geq a_j$ for $i\leq j$. Let $\alpha,\beta,\eps > 0$ be any constants and let $I,J\subseteq [n]$ be two sets of indices such that $\max I < \min J$ and such that $|I| = \alpha n$ and $|J| =\beta n$. Then there exists $C > 0$ such that for any initial value $\xi^0$ of the algorithm, whp we have $d(I, t) \le d(J, t) + \eps$  for all $t \in [Cn, e^{n/C}]$. The interval may be replaced by $[0,e^{n/C}]$ if 
$d(I,0) \le d(J,0) + \eps/2$.
\end{lemma}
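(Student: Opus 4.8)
The plan is to track the rescaled gap between the two zero-densities and show it has a constant negative drift whenever it is large, so that the negative-drift bounds of Theorem~\ref{thm:tailbounds} apply. Concretely, set
\[
W_t := \frac{|\{i\in I:\xi^t_i=0\}|}{\alpha} - \frac{|\{j\in J:\xi^t_j=0\}|}{\beta},
\]
so that $W_t = n\,(d(I,t)-d(J,t))$ and the target statement $d(I,t)\le d(J,t)+\eps$ is equivalent to $W_t\le\eps n$. We may assume $\eps<1$, since densities lie in $[0,1]$ and the claim is otherwise trivial. The heart of the argument is the drift claim: there is a constant $\kappa>0$ so that $\Exp[W_{t+1}-W_t\mid\xi^t]\le -\kappa$ for \emph{every} configuration $\xi^t$ with $d(I,t)-d(J,t)>\eps/2$. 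I would then invoke Theorem~\ref{thm:tailbounds} with thresholds $a=\eps/2$ and $b=\eps$.

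For the drift, expand $\Exp[W_{t+1}-W_t\mid\xi^t]$ by linearity over the bits. For a bit $k$, let $p_k^+$ be the probability that the offspring is accepted \emph{given that $k$ flips}, in the case $\xi^t_k=0$ (a fitness gain of $a_k$), and let $p_k^-$ be the analogous probability when $\xi^t_k=1$ (a loss of $a_k$). Since each bit flips independently with probability $c/n$, bit $k$ flips and the step is accepted with probability $(c/n)\,p_k^{\pm}$. Writing $S_I^+=\sum_{i\in I:\xi^t_i=0}p_i^+$ and $S_I^-=\sum_{i\in I:\xi^t_i=1}p_i^-$, and likewise $S_J^{\pm}$, and expressing the change of the zero-count of $I$ as (accepted ones-to-zero flips) minus (accepted zeros-to-one flips), I get the exact identity
\[
\Exp[W_{t+1}-W_t\mid\xi^t]=\frac cn\left(\frac{S_I^- - S_I^+}{\alpha}-\frac{S_J^- - S_J^+}{\beta}\right).
\]

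The main obstacle, and the place where the weight ordering is essential, is a monotonicity property of the acceptance probabilities: for $i\in I$ and $j\in J$ with equal bit values one has $p_i^+\ge p_j^+$ and $p_i^-\le p_j^-$. I would prove this by a pairing coupling. Compare the two mutation experiments by using the \emph{same} flip outcomes on all bits other than $i,j$ and coupling the flip indicators of $i$ and $j$ to be equal; since $a_i\ge a_j$, acceptance of the $j$-gain-flip then forces acceptance of the $i$-gain-flip (and, for losses, acceptance of the $i$-loss-flip forces acceptance of the $j$-loss-flip), giving the two inequalities. Writing $S_I^+/\alpha = n\,d(I,t)\,\bar p_I^+$ with $\bar p_I^+$ the average over the zeros of $I$, and similarly for the other sums, monotonicity gives $\bar p_I^+\ge\bar p_J^+\ge(1-c/n)^{n-1}\ge e^{-c}/2$ (a lone gain-flip is always accepted) and $\bar p_I^-\le\bar p_J^-$. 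Since $d(I,t)>d(J,t)$, the $p^-$-contribution is $\le 0$ and the $p^+$-contribution is $\le -n\,\bar p_J^+\,(d(I,t)-d(J,t))$, whence $\Exp[W_{t+1}-W_t\mid\xi^t]\le -\tfrac{c e^{-c}}{2}(d(I,t)-d(J,t))\le -\tfrac{ce^{-c}\eps}{4}=:-\kappa$ for large $n$.

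It remains to verify the jump condition and assemble the two time regimes. Here $|W_{t+1}-W_t|\le M\cdot(\text{number of flipped bits})$ with $M=\max(1/\alpha,1/\beta)$, and the number of flips is $\Bin(n,c/n)$-distributed, whose tail decays faster than any geometric, yielding condition~2 of Theorem~\ref{thm:tailbounds}. As $(W_t)$ is a function of the Markov chain $(\xi^t)$ and both hypotheses were checked conditionally on $\xi^t$, they hold conditionally on $W_t$ (by averaging over the configurations with a given value of $W_t$), so the theorem applies to $(W_t)$. Part~(a), applied with $b=1$ (using $W_0\le n$) and $a=\eps/2$, shows $W$ falls to $\le(\eps/2)n$ within $Cn$ steps whp; part~(b), started at $(\eps/2)n$ with $b=\eps$, shows $W$ does not reach $\eps n$ for $e^{\Omega(n)}$ steps whp. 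Combining them (after adjusting $C$) gives $W_t<\eps n$ for all $t\in[Cn,e^{n/C}]$ whp. Finally, if $d(I,0)-d(J,0)\le\eps/2$ then $W_0\le(\eps/2)n$ already lies in the low region, so part~(b) alone yields $W_t<\eps n$ for all $t\in[0,e^{n/C}]$, which is the claimed addendum.
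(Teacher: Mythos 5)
Your proof is correct and takes essentially the same route as the paper: both track the rescaled gap $n\,(d(I,t)-d(J,t))$, establish monotonicity of the acceptance probabilities via the same pairing coupling (the paper phrases it as $\Pr[\xi_i^{t+1}=1]\ge\Pr[\xi_j^{t+1}=1]$ for zero bits $i<j$, you as conditional acceptance probabilities given the flip), derive a constant negative drift whenever the density gap exceeds $\eps/2$, and conclude by applying Theorem~\ref{thm:tailbounds}(a) and then (b) with the same thresholds. Your write-up is somewhat more explicit in the drift bookkeeping and in justifying that the tail-bound theorem applies to the projected process $W_t$, but the substance of the argument is identical.
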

\begin{proof}
We use a coupling that was first applied in~\cite{jagerskupper2011combining}. We first consider a single step, so fix $t \geq 0$ and $\xi^t$. Let $i< j$ be two positions such that $\xi_i^{t}=\xi_j^{t}=0$. We claim that $\Pr[\xi_i^{t+1} = 1] \ge \Pr[\xi_j^{t+1}=1]$. To see this assume first that all bit flips except the $i$th and the $j$th are fixed. Then the following holds. If the bit flips for the $i$th and the $j$th bit are identical then we change either none of the two bits or both. Thus, the only interesting case is if exactly one of these two bits is flipped. Here we observe the following: if the case that the $j$th bit is flipped is accepted then so is the case that the $i$th bit is flipped (due to the fact that we assumed $a_1 \ge a_2\ge \ldots\ge a_n$). A few moments of thought show that this already proves our claim. 

Similarly we deduce that if $k< \ell$ are two positions such that $\xi_k^{t}=\xi_\ell^{t}=1$, then we have $\Pr[\xi_k^{t+1} = 0] \le \Pr[\xi_\ell^{t+1}=0]$. 

Now we can prove the lemma.
We claim that by Theorem~\ref{thm:tailbounds} it suffices to show that $n\cdot (d(I)-d(J))$ has a negative drift, whenever $d(I)\ge d(J)+\eps/2$. Indeed, then we may first apply Theorem~\ref{thm:tailbounds} (a) to $X_t := n\cdot (d(I,t)-d(J,t))$ with $a:= \eps/2$ and $b:= d(I,0)-d(J,0)$ to conclude that after a linear number of steps, whp $X_t \leq \eps n/2 $. Afterwards, we may apply Theorem~\ref{thm:tailbounds} (b) again with the same $X_t$ and $a$, but with $b:= \eps$ to conclude that whp $X_t \leq \eps n$ for an exponential number of steps.

So let us show that $n\cdot (d(I)-d(J))$ has a negative drift for $d(I)\ge d(J)+\eps/2$. So assume that $d(I,t)\ge d(J,t)+\eps/2$ holds for some $t \geq 0$. Let $p_{I0} := \min\{\Pr[\xi_i^{t+1}=1]:i\in I, \xi_i^{t}=0\}$ and $p_{J0} := \max\{\Pr[\xi_j^{t+1}=1]:j\in J, \xi_j^{t}=0\}$ and observe that we know from above that $p_{I0} \ge p_{J0}$. Similarly, we have for $p_{I1} := \max\{\Pr[\xi_k^{t+1}=0]:k\in I, \xi_k^{t}=1\}$ and $p_{J1} := \min\{\Pr[\xi_\ell^{t+1}=0]:\ell\in J, \xi_\ell^{t}=1\}$ that $p_{I1} \le p_{J1}$. Moreover, observe that $p_{J0} = \Omega(1/n)$, because the probability to flip the $\ell$-th bit and no other bit is $\Omega(1/n)$. Thus, the
the expected change of $n\cdot (d(I,t)-d(J,t))$ is at most
\begin{align*}
n[(1-d(I,t)) p_{I1} - d(I,t)  p_{I0} ] - n[[(1-d(J,t)) p_{J1} - d(J,t) p_{J0} ] \le 
n \underbrace{(d(J,t)-d(I,t))}_{\le -\eps/2}  (p_{I1}+ p_{J0}),
\end{align*}
which is $-\Omega(1)$, as desired. 
\end{proof}

\subsection{Monotone Functions}\label{sec:monotone}

We already saw that for $c<1$ the runtime of algorithm EA is $\Theta(n\log n)$ for all strictly monotone functions. 
For the natural choice $c=1$ it is still unknown whether the runtime is $\Theta(n\log n)$. 
The best known upper bound is $O(n^{3/2})$ due to Jansen~\cite{jansen2007brittleness}. 
The following theorem explains why this may be not so easy to show, as we do get an exponential optimization time for $c$'s that are not much larger. 

\begin{theorem}
For every constant $c\geq 2.2$, there is a strictly monotone function such that whp the optimization time of the EA with mutation rate $c/n$ is $e^{\Omega(n)}$.
\end{theorem}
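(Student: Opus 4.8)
The plan is to build a strictly monotone \emph{hot-topic} function for which the favourable ordering of bit-densities guaranteed for linear functions by Lemma~\ref{lem:lindensity} is deliberately sabotaged. I would fix constants $0<\beta<\alpha<1$ and, for each \emph{level} $i=1,2,\dots$, designate an \emph{active set} $A_i\subseteq[n]$ of size $\alpha n$, choosing the sequence so that consecutive active sets are reshuffled and each $A_{i+1}$ contains a constant fraction of bits not in $A_i$. The fitness would consist of three tiers separated by large weight gaps: a dominant \emph{level term}, maximized by turning the current active set all-ones; an intermediate term rewarding ones inside $A_i$; and a negligible global tie-breaker $\sum_{k}x_k$ whose sole purpose is to make $f$ \emph{strictly} monotone. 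All tiers are nondecreasing in every bit, so $f$ is monotone, and the tie-breaker makes every $0\to 1$ flip strictly increasing. The weight gaps ensure that while the algorithm sits at level $i$ an offspring is accepted if and only if it does not decrease the number of ones in $A_i$; the $(1-\alpha)n$ bits outside $A_i$ thus feel selection only through the negligible tie-breaker.

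I would then analyse the competition, among the unprotected bits, between two forces. In a \emph{strong} step, where the number of ones in $A_i$ strictly increases, the offspring is accepted no matter what happens elsewhere, so every unprotected bit is toggled with probability $c/n$; toggling is symmetric and hence drives the density of zeros towards $1/2$ (\emph{churning}). In a \emph{neutral} step, where the ones in $A_i$ are unchanged, acceptance is decided by the tie-breaker, which rewards additional ones and hence weakly pushes unprotected bits towards one (\emph{selection}). The stationary density of zeros among the unprotected bits is set by the balance of these two rates. For $c<1$ selection always wins, consistent with Theorem~\ref{thm:clessthan1}; the heart of the argument is a single-step drift computation showing that for $c\ge 2.2$ churning wins, so that this density has a stable equilibrium $\delta^{*}>0$ bounded away from $0$. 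The threshold is exactly where this balance tips, giving a transcendental balance condition in the spirit of the $a<e$ criterion of Theorem~\ref{thm:randomdecline}, and $c\ge 2.2$ sits safely on the churning side; the reshuffling of active sets is what keeps a constant fraction of already-cleaned bits perpetually unprotected, which is precisely the phenomenon that Lemma~\ref{lem:lindensity} rules out for genuinely linear functions.

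With the drift in hand the tail bounds finish the proof. Let $W_t$ be the number of one-bits in a trailing region $R$ of size $\Theta(n)$ that the optimum forces to be all-ones; the equilibrium above says $W_t$ has a strictly negative drift $-\eps$ once $W_t$ exceeds $(1-\delta^{*})|R|$. The second hypothesis of Theorem~\ref{thm:tailbounds}, the geometric step bound, holds because the number of bits flipped in one round is stochastically dominated by $\Bin(n,c/n)$ and therefore has exponential tails. Applying Theorem~\ref{thm:tailbounds}(b) to $X_t:=W_t$, with the lower threshold at the equilibrium $(1-\delta^{*})|R|$ and the upper threshold at $|R|$, shows that, once $W_t$ has entered the equilibrium band (which happens quickly), it takes $e^{\Omega(n)}$ further steps to reach $W_t=|R|$. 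Since the global optimum requires $W_t=|R|$, the optimization time is $e^{\Omega(n)}$ with probability $1-e^{-\Omega(n)}$, as claimed. To control the dependence between $R$ and the perpetually moving active set I would reuse the bit-wise coupling from the proof of Lemma~\ref{lem:lindensity} together with standard concentration, replacing the idealized drift by its empirical value up to $o(1)$ errors.

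The main obstacle is the drift computation together with its book-keeping. Theorem~\ref{thm:tailbounds} demands a negative drift that holds \emph{per state} and \emph{uniformly} over every reachable configuration, not merely on average over a stationary phase; establishing this requires showing that the algorithm genuinely keeps advancing levels (so that strong, churning steps keep occurring at a positive rate) while never being able to protect $R$, and it requires absorbing the transient disturbance created each time a level boundary is crossed. Turning the qualitative ``churning beats selection'' picture into a clean constant such as $2.2$ — far below the $c>16$ of the earlier argument of Doerr et al.\ — is exactly what makes balancing the two competing rates delicate.
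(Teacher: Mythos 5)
Your high-level picture --- reshuffled active sets, ``churning'' of unprotected bits during accepted improvements of the active set versus weak selection through a tie-breaker, and Theorem~\ref{thm:tailbounds}(b) to turn a persistent drift into an exponential lower bound --- is exactly the intuition behind the paper's construction. But your concrete design has a gap that breaks the argument: the level-advancement rule. You let the dominant term be ``maximized by turning the current active set all-ones,'' i.e.\ the level advances only once $A_i$ contains no zeros. A strong (churning) step requires flipping at least one zero bit of $A_i$, so its probability is proportional to the zero-density $d(A_i)$, which your rule forces down to $0$ within each level. The balance you compute then gives an equilibrium zero-density in the unprotected region that is \emph{proportional to} $d(A_i)$ (roughly $\alpha c\, e^{(1-\alpha)c} d(A_i)$), not a constant $\delta^*>0$: during the coupon-collector tail of each level, selection dominates everywhere, the unprotected bits are cleaned along with the active ones, and the whole string reaches the optimum in polynomial time. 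The paper's fix, absent from your proposal, is to advance the level when the zero-density of a \emph{small random subset} $B_{i+1}\subseteq A_{i+1}$ (of size $\beta n$ with $\beta\ll\alpha$) drops below a constant threshold $\eps$; this keeps $d(A_{i+1})\approx\eps$ bounded away from zero throughout every level, hence keeps the churning rate at $\Omega(\eps)$ per step.

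Even with that mechanism repaired, the constant $2.2$ does not come from a stationary ``churning beats selection'' balance. That balance would read $\alpha c > e^{-(1-\alpha)c}$, which already holds for, say, $c=1.2$ and $\alpha$ close to $1/2$ --- if your equilibrium argument were sound, it would prove exponential time in a regime the paper explicitly leaves open, which is a strong sign the mechanism is misidentified. The actual constraint is a \emph{race}: each level advance re-randomizes the active set, diluting the unprotected region's excess density toward the ambient average, and that excess (about $\alpha\delta$) must be regained before the new active set's density falls from ambient down to the threshold $\eps$ (a loss of about $(1-\alpha)\delta$). Comparing the two drifts gives the paper's condition~\eqref{eq:cond}, namely $\alpha c - e^{-(1-\alpha)c} > \alpha/(1-\alpha)$, and the extra term $\alpha/(1-\alpha)$ is precisely what pushes the threshold to $2.2$. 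Structurally this also means the proof cannot be one global drift argument for a fixed trailing region: the paper runs an induction over levels, restoring the invariants (active-set density $\approx\eps$ at each $t_i$, unprotected density $\geq \eps+\delta$, total density $\geq \eps+3\gamma$) with failure probability $e^{-\Omega(n)}$ per level via Theorem~\ref{thm:tailbounds} and Lemma~\ref{lem:lindensity}, and takes a union bound over the $e^{\mu n}$ levels. Your final paragraph correctly senses this uniformity problem, but the threshold sets $B_i$ and the race condition are the missing ideas needed to resolve it.
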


\noindent
A similar theorem was shown by Doerr et al.~\cite{doerr2013mutation} with $c>16$ instead of $c\ge 2.2$.

\begin{proof}
The main idea of our proof is the following. Consider $n$ bits and split them in high order bits (first $\alpha n$, say) and low order bits (the remaining $(1-\alpha)n$). Now consider the (linear) fitness function $a_i = n$ for $1\le i\le \alpha n$ and $a_i = 1$ otherwise. Then the density of zero bits will decrease faster among the high order bits. In particular, there will be a time when the density among the high order bits is $\eps$, but the density among the low order bits is still $\eps+\delta$, for some $\delta = \delta(\alpha, \eps, c) >0$. The overall density in the string is then $\eps + (1-\alpha)\delta > \eps$.

If we now change the fitness function by picking a random set of $\alpha n$ bits as 'new' high order bits, we will start with a density of $\eps + (1-\alpha)\delta$ among the high order bits and will decrease it during the second round to density $\eps$. Whenever we decrease the number of zero bits among the high order bits, we accept the offspring regardless of the changes of low order bits. In particular, in such rounds the density among the low order bits will \emph{increase} in expectation towards $1/2$. By choosing the constants appropriately, this yields overall a \emph{positive} drift for the density among the low order bits. The drift is actually strong enough that the density will increase from $\eps + (1-\alpha)\delta$ to $\eps +\delta$ in a very short time -- this will finish before the density among high order bits can decrease from $\eps + (1-\alpha)\delta$ to $\eps$. So after the second round again we are in the situation that the density among high order bits is $\eps$ and the density among low order bits is (at least) $\eps +\delta$. We will show that we can play this game for exponentially many rounds.

Formally we proceed as follows. For a given $c\ge 2.2$ we choose $0< \alpha < 1/2$ which satisfies
 \begin{equation}\label{eq:cond}
 \alpha c - e^{-(1-\alpha) c}   > \frac{\alpha}{1-\alpha}.
 \end{equation}
A numerical calculation shows that this is possible for all $c \geq 2.2$. We choose a constant $\beta = \beta(\alpha,c)$ which is sufficiently small compared to $\alpha$ (by abuse of notation we denote this by $\alpha \gg \beta$). To provide an overview, the constants in the proof will satisfy 
\begin{equation}\label{eq:constantsmonotone}
1/2 > \alpha \gg \eps \gg \delta \gg \beta \gg \gamma \gg \mu >0,
\end{equation}
where $\gamma$ and $\mu$ are constants that will appear later in the proof. The $\gg$-sign hereby indicates the only restriction that we require: the constants are chosen from left to right, so that $\alpha$ satisfies equation~\eqref{eq:cond} and then every constant is suitably small compared to all previously chosen constants.

For $1 \leq i \leq e^{\mu n}+1$ we choose sets $A_i \subseteq [n]$ of size $\alpha n$ independently and uniformly at random, and we choose sets $B_i\subseteq A_i$ uniformly at random of size $\beta n$. We define the {\em level} $\ell(\xi)$ of a vector $\xi\in\{0,1\}^n$ by 
\begin{equation}\label{as:monotone:e1}
\ell(\xi) := \max \{ \ell' \in [e^{\mu n}] : |\{j \in B_{\ell'} : \xi_j = 0\} |\le \eps |B_{\ell'}|\}\quad\text{(with $\ell(\xi)=0$, if no such $\ell'$ exists).}
\end{equation}

With this notation at hand we can then define a fitness function $f:  \xi\in\{0,1\}^n \to \R$ as follows:
\begin{equation}\label{as:monotone:e2}
f(\xi) =\ell(\xi) \cdot n^{2} +  \sum_{i\in A_{\ell(\xi)+1}}\xi_i\cdot n +  \sum_{i\not\in A_{\ell(\xi)+1}} \xi_i. 
\end{equation}

So the set $A_{i+1}$ describes the set of high order bits at level $i$, where the level is determined by the sets $B_i$. One easily checks that this function is monotone. Indeed, assume that $\xi$ is dominated by $\xi'$, i.e., $\xi_i \leq \xi_i'$ for all $1\leq i \leq n$. Then $\ell(\xi) \leq \ell(\xi')$. If $\ell(\xi) = \ell(\xi')$ then it is obvious that $f(\xi)\leq f(\xi')$ (and we have a strict inequality if $\xi \neq \xi'$). On the other hand, if $\ell(\xi) < \ell(\xi')$ then $f(\xi) < \ell(\xi)n^{2} + n^2  \leq \ell(\xi')n^{2}  \le f(\xi')$, as desired.

Observe also that, by definition of the function~$f$, the algorithm will never accept a change that will decrease the \emph{current level} $\ell(t) := \ell(\xi^{t})$. That is, when the current level is $i$ the density of zero bits will stay below $\eps$ within $B_i$ (or the level increases). 


Note that the definition of $\ell$ refers to {\em all} sets $B_i$, which makes the analysis quite tricky.  In order to avoid this we define a slightly different level function $\tilde\ell = \tilde\ell(\xi,t)$ that depends on the time~$t$. More precisely, recall that $\xi^t$ denotes the search point after round $t$. In order to compute $\tilde\ell(\xi,t+1)$ for a search point $\xi$ we proceed as follows: instead of considering the maximum over all integers $\ell' \in [e^{\mu n}]$ as in \eqref{as:monotone:e1}, we  only consider values $\ell' \in[ \tilde\ell(\xi^t, t) +1]$. Informally, $\tilde\ell$ thus coincides with $\ell$ except that we only accept level gains of exactly one. For brevity, we write $\tilde\ell(t) := \tilde\ell(\xi^t,t)$ to denote the value of $\tilde\ell$ at time $t$. We note, however, that $\tilde\ell(t)$ implicitly depends on the whole history of the algorithm. We now  consider algorithm EA on the time-dependent fitness function
$$
\tilde f(\xi,t) =\tilde \ell(t) \cdot n^{2} +  \sum_{i\in A_{\tilde \ell(t)+1}}\xi_i\cdot n +  \sum_{i\not\in A_{\tilde \ell(t)+1}} \xi_i.
$$
As long as $\tilde \ell(t)$ coincides with $\ell(t)$, we have $\tilde f(\xi^t,t) = f(\xi^t)$. The algorithm thus performs the same steps on both functions as long as $\tilde \ell(t) = \ell(t)$. We will show later that whp $\tilde\ell(t)$ and $\ell(t)$ indeed coincide for an exponential number of rounds. The advantage of $\tilde f$ is that we do not have to choose the sets $A_i, B_i$ in advance: we can postpone choosing $A_{i+1}, B_{i+1}$ until we enter the $i$th level. 

Before we now dive into the proof we fix some notations. Let $T := e^{\mu n}$. Our aim is to show that whp the algorithm will run for at least $T$ rounds. With $t_i$ we denote the points in time where $\tilde \ell$ increases from $i-1$ to $i$, i.e., $\tilde \ell(t_{i}-1)=i-1$ and $\tilde \ell(t_{i})=i$. Moreover, for convenience we set $t_0 :=0$ and $A_0 := B_0 := B_{-1} = \emptyset$. Recall that for a set $S\subseteq [n]$ we denote 
by $d(S,t)$ the density of zero bits in the set $S$ with respect to the $t$-th search point $\xi^{t}$. 

Observe that algorithm EA (with respect to the function $\tilde f$) does not need to know $A_{i+1}$ before time $t_i$. By Chernoff bounds (and a union bound over all $1\le t\le t_i \le T$) we can thus assume (by making $\mu$ sufficiently small) that for an arbitrarily small constant $\gamma$ we have whp that
\begin{equation}\label{eq:lateAi}
\text{for all $i$ s.t. $t_i\le T$:}\quad d(A_{i+1},t) =  d([n],t)\pm\gamma \quad\text{for all }1\le t\le t_i.
\end{equation}
From time $t_i$ onwards algorithm EA now optimizes with respect to the set $A_{i+1}$. Note that we need to know the set $B_{i+1}$ only in order to check whether we should increase $\tilde\ell$. If we thus consider a variant of the process in which we only check whether we reject an offspring or not based on the sets $B_1,\ldots,B_i$  (i.e., we do not check whether we should increase the level), then we get a sequence of search points $\hat\xi^t$ that coincide with $\xi^t$ for all $t < t_{i+1}$. On the other hand, Chernoff bounds and a union bound over all $1\le t\le T$ imply that for this modified sequence a randomly chosen set $B\subseteq A_{i+1}$ of size $|B|=\beta n$ satisfies $d(B) = d(A_{i+1},t)\pm\gamma$ for all $t< T$. In particular we thus have
\begin{equation}\label{eq:lateBi}
\text{for all $i$ s.t. $t_{i}\le T$ and all $1\le t < \min\{t_{i+1},T\}$:}\quad d(B_{i+1},t) =  d(A_{i+1},t)\pm\gamma.
\end{equation}
Finally, the probability to flip a linear number of bits in one round is exponentially small, so whp we have 
\begin{equation}\label{eq:smallsteps}
\text{for all $1 \leq t \leq T$: \quad the number of bits flipped in round $t$ is less than $\beta\gamma n$.} 
\end{equation}
This condition implies in particular that $d(A_{i},t+1) =  d(A_{i},t)\pm\gamma$ and $d(B_{i},t+1) =  d(B_{i},t)\pm\gamma$ for all $i\geq 1$, $t\leq T$. Also note that the probability to flip at least $k$ bits in one round drops at least geometrically as $k \to \infty$, so the number of zero bits in $\xi^t$ satisfies condition 2 in Theorem~\ref{thm:tailbounds}. Similarly for the number of zero bits within $A_i$ or within $B_i$ in $\xi^t$. Note that this implies in particular, that we may assume that within the first $T$ steps of algorithm EA the density of any of these sets will never change opposite to the direction of its drift by more than~$\gamma$. 

Assume now that we can show (we will do this below, see (iii)) that algorithm EA with respect to the function $\tilde f$ satisfies 
$d([n],t) \ge \eps + 3\gamma$ for all $1\le t\le T$. Then one easily checks that~\eqref{eq:lateAi} and~\eqref{eq:lateBi} imply that for all $i$ such that $t_i\le T$ we have $d(B_{i},t) > \eps$ for all $t<t_i$  which implies that $\ell(t) = \tilde\ell(t)$ for all $t\le T$. Thus it indeed suffices to analyse EA with respect to $\tilde f$, and this is what we will now do.
More precisely, for a suitably chosen constant $\delta >0$ we will show by induction that for every fixed 
$i$ such that $t_i\le T$ the following three properties hold with probability at least $1-e^{-2\mu n}$: 
\begin{enumerate}[(i)]
\item $d(A_{i},t_{i})= \eps \pm 2\gamma$,
\item $d([n] \setminus (A_{i}\cup B_{i-1}), t_i) \ge \eps +\delta$,
\item $\forall t_{i} \le t < \min\{t_{i+1},T\}$:\ \ $d([n],t)\ge \eps+3\gamma$.
\end{enumerate}
Note that the choice of $T$ together with a union bound argument thus implies that these properties all hold for all $i$ simultaneously, which then concludes the proof.

First consider (i). We  know by~\eqref{eq:lateBi} that $d(A_i, t_i-1) = d(B_i, t_i-1)\pm \gamma$. For $i>1$, by induction hypothesis (iii) holds for all $t < t_i$, and we have argued already that this implies $d(B_i, t)> \eps$ for all $t<t_i$. Note that the latter is also true for the base case $i=1$. As $d(B_i, t_{i}) \le \eps$ by definition of $t_i$, the claim thus follows easily together with~\eqref{eq:smallsteps}.
 
Next we prove (ii) and (iii) together by induction. More precisely, we will show that if (ii) holds for $i-1$ then (iii) also holds for $i-1$ and (ii) also hold for $i$. Since $t_{0}=0$ and $A_0 = B_{-1}=\emptyset$, and recalling that $\xi^0$ is chosen randomly, we have that (ii) holds for $i=0$, which gives us the start of the induction. So consider some $i\ge 1$.
By the induction assumption we know that
\begin{equation}\label{eq:totaldensity}
d([n],t_{i-1})\stackrel{{\scriptsize (i),(ii)}}\ge \alpha(\eps-2\gamma)+(1-\alpha-\beta)(\eps+\delta)  \ge \eps + (1-\alpha)\delta-\beta
\end{equation} by suitable choice of the constants, cf.~\eqref{eq:constantsmonotone}. By definition of $t_{i-1}$  and~\eqref{eq:smallsteps} we know that 
\begin{equation}\label{eq:monotone:hlp:e1}
\eps-\gamma \le d(B_{i-1},t_{i-1})\le \eps. 
\end{equation} 
Let $R=[n] \setminus (A_{i}\cup B_{i-1})$. By the (random) choice of $A_i$ and~\eqref{eq:lateAi} we thus know that
\begin{equation}\label{eq:worstcasestart}
d(A_{i}, t_{i-1}) = d([n],t_{i-1}) \pm \gamma\qquad\text{and}\qquad d(R,t_{i-1}) = d([n]\setminus B_{i-1},t_{i-1}) \pm \gamma \stackrel{\eqref{eq:totaldensity},\eqref{eq:monotone:hlp:e1}}{\ge}
d([n],t_{i-1})\pm \gamma. 
\end{equation}

We first compute the drift within $A_{i}$ while $d(A_i,t) > \eps + \gamma$ (and in particular, $t_{i-1} \leq t < t_i$ by \eqref{eq:lateBi}). 
To do this observe that the choice of the weights in the function~$\tilde f$ are such that the bit flips within  $A_{i}$ dominate: if they lead to a decreased number of zeroes within $A_{i}$ then the offspring is accepted regardless of what happens for the bits outside of $A_{i}$ (unless the density within $B_{i-1}$ grows above $\eps$ and the offspring is thus rejected for this reason). Similarly, if the number of zeroes within $A_{i}$ increases then the offspring is always rejected (note that the assumption $d(A_i,t) > \eps + \gamma$ together with $(\ref{eq:lateBi})$ implies that $\tilde \ell$ cannot increase). 
In particular, the density within $A_{i}$ is thus non-increasing. 

Consider $t_i' :=\inf \{t \in [t_{i-1},T]: d(A_{i},t) \leq \eps+(1-\alpha)\delta-\beta\}$, and note that $t_i' \leq t_i$ by \eqref{eq:lateBi}. Then $d(A_{i},t_i') \geq \eps + (1-\alpha)\delta-\beta-\gamma$: for $t_i'> t_{i-1}$ this follows from the definition of $t_i'$ and~\eqref{eq:smallsteps}, while for $t_i' = t_{i-1}$ it follows from~\eqref{eq:totaldensity} and~\eqref{eq:worstcasestart}.
Since the density in $A_{i}$ is non-increasing whenever $d(A_i,t) > \eps + \gamma$, we deduce that for all $t\in [t_i',\min\{t_i,T\}]$ the probability that we flip at least two zero bits in $A_{i}$ to a one is bounded by $O(\eps^2)$. The drift within $A_{i}$ is thus essentially determined by the probability that we flip exactly one bit within $A_{i}$ (which is $(1+o(1))\alpha c e^{-\alpha c}$) and that this bit is a zero bit (which gives a factor $d(A_{i},t)$). Note also that the probability that we flip a bit within $B_{i-1}$ is bounded by $c\beta$, which we may assume to be also of the order $O(\eps^2)$ by~\eqref{eq:constantsmonotone}. We thus know that the expected 
drift for the density within $A_{i}$  is  for all $t_i'\le t\le \min\{t_i,T\}$
$$
-\frac{ (1+o(1))\alpha c e^{-\alpha c}\cdot d(A_{i},t)  +O(\eps^2)}{\alpha n}= -\frac{\eps c}n e^{-\alpha c} (1+O(\eps)) =: \Delta_1.
$$

Next we consider what happens within $R$. By \eqref{eq:worstcasestart} and Lemma~\ref{lem:lindensity} we may assume that 
$d(R,t) \ge d(A_{i},t)-3\gamma$ for all  $t_{i-1}\le t\le \min\{t_i,T\}$. In particular, $d(R,t_i') \ge d(A_{i},t_i') - 3\gamma \geq \eps+(1-\alpha)\delta - \beta - 4 \gamma$. We want to show that the density $d(R)$ {\em increases} to at least $\eps + \delta$. Thus, we may assume without loss of generality that
$d(R,t) = \eps \pm 2\delta$ for all $t_{i}'\le t< \min\{t_i,T\}$.
We will now compute a lower bound on the drift within $R$. Consider first the case that the number of zero bits in $A_{i}$ remains the same (which happens with probability at most $e^{-\alpha c}+ O(\eps)$).
In this case we accept the changes within $R$ similarly as we did for those within $A_{i}$, as explained above. In other words: with probability at most $ e^{-\alpha c}(|R|/n)ce^{-(|R|/n) c}\cdot \eps + O(\delta)$ the number of zero bits within $R$ decreases by one, and the contribution of events where it may increase is $O(\eps^2)$. 

If the number of zero bits within $A_{i}$ decreases (which happens with probability $\alpha c e^{-\alpha c} \eps +O(\eps^2)$), then we accept any change within $R$. In this case the expected change in the number of zero bits within $R$ is $c|R|/n+O(\eps)$. 
Summarizing, we observe that the expected drift for the density within $R$ is  
\begin{align*}
 \frac{\alpha c e^{-\alpha c}\eps \cdot \frac{c|R|}{n}-e^{-\alpha c}\frac{|R|}{n}ce^{-(|R|/n) c}\cdot \eps+  O(\delta +\eps^2)}{|R|} 
 =\frac{\eps c}ne^{-\alpha c}  \left(\alpha c-  e^{-(1-\alpha) c}   \right)+ O(\tfrac1n (\delta + \eps^2+\beta)) =:\Delta_2,
 \end{align*}
where we used that $|R|/n = 1-\alpha +O(\beta)$.

Observe that, up to the error terms, the ratio $\Delta_2/|\Delta_1|$ is exactly the left hand side of  \eqref{eq:cond}. By making 
$\delta, \eps$ and $\beta$ small enough we may thus assume that 
\begin{equation}\label{eq:cond2}
\frac{\Delta_2}{|\Delta_1|} > \frac{\alpha}{1-\alpha} + O(\beta+\gamma).
 \end{equation} 
 

We now employ the tail bounds in Theorem~\ref{thm:tailbounds}: as we have $d(A_{i},t_{i}') \geq \eps+(1-\alpha)\delta-\beta-\gamma$, Theorem~\ref{thm:tailbounds} tells us that with probability $1-e^{-\Omega(n)}$ it takes at least $S=[(1-\alpha)\delta-\beta-3\gamma]/|\Delta_1|$ steps to decrease it to $\eps+\gamma$. Before this density is reached the density in $B_{i}$ is strictly larger than $\eps$ by~\eqref{eq:lateBi}. On the other hand, $d(R,t_{i}') \geq \eps+(1-\alpha)\delta - \beta-4\gamma$. Again by Theorem~\ref{thm:tailbounds} the density in $R$ is increased to at least $\eps+\delta+\gamma$ with probability $1-e^{-\Omega(n)}$ after at most $(\alpha\delta+\beta+6\gamma)/\Delta_2${\tiny $\stackrel{\eqref{eq:cond2}}{<}$}$S$ steps, and stays above $\eps+\delta$ by Theorem~\ref{thm:tailbounds}b. This proves both (ii) and (iii) of the induction and thus concludes the proof of the theorem. 
%
%
%
\end{proof}

A natural question is to determine the behaviour of the algorithm if the mutation parameter $c$ lies in the interval $[1,2.2)$. We leave this as an intriguing open problem.

\subsection{Linear Functions}\label{sec:linear}

In this section we give a new proof that the optimization time of algorithm EA with mutation parameter $c$ is $(1+o(1))\tfrac{e^{c}}{c}  n\log n$ for any constant $c>0$ if we restrict ourselves to \emph{linear} functions $f(\xi) = \sum_{i=1}^n a_i \xi_i$, as was first proven in~\cite{Witt2013}. 

\begin{theorem}\cite{Witt2013}\label{thm:linearfunctions}
For any linear function $f$ and every constant $c>0$, algorithm EA with mutation rate $c/n$ has optimization time $(1+o(1))e^{c}/c \cdot n\log n$ whp.
\end{theorem}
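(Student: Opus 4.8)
The plan is to prove the $(1+o(1))\frac{e^c}{c}n\log n$ bound by combining the drift theorems from Section~\ref{drift:theorems} with the density-domination tool from Lemma~\ref{lem:lindensity}. The central quantity to track is $X_t := |\{i \in [n] : \xi_i^t = 0\}|$, the number of zero bits. The key insight is that the leading constant $\frac{e^c}{c}$ must come from analysing the drift of $X_t$ precisely: a zero bit at position $i$ is flipped to one and the offspring accepted with a probability that, for the ``typical'' bit, is governed by the factor $\frac{c}{n}e^{-c}(1+o(1))$, namely the probability $\frac{c}{n}$ of flipping that single bit times the probability $e^{-c}(1+o(1))$ that no other bit flip spoils acceptance. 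This yields an expected per-step decrease of $X_t$ of order $\frac{c}{n}e^{-c}X_t(1+o(1))$, i.e.\ a multiplicative drift with $\delta = (1+o(1))\frac{c}{n}e^{-c}$, and Theorem~\ref{thm:multiplicative} then delivers the upper bound $(1+o(1))\frac{1}{\delta}\log n = (1+o(1))\frac{e^c}{c}n\log n$ whp. The difficulty is that acceptance of a flip at position $i$ is \emph{not} simply ``no other bit flips'': one may also flip lower-order one-bits to zero and still be accepted. This is precisely where Lemma~\ref{lem:lindensity} enters.

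\textbf{Upper bound.} First I would partition $[n]$ into a ``head'' block $I$ of the highest-weight $\alpha n$ indices and a ``tail'' block $J$ of the remaining indices, for a small constant $\alpha$ to be optimised. Lemma~\ref{lem:lindensity} guarantees that whp, after a linear number of steps and for exponentially long, the zero-density in the head does not substantially exceed that in the tail; more refined applications (partitioning into many thin blocks) force all blocks to have essentially the same zero-density $d(t) = X_t/n$ up to $o(1)$. Under this ``balanced density'' regime I would compute the drift of $X_t$ directly. The gain from flipping a single zero bit and rejecting all competing increases is $(1+o(1))\frac{c}{n}e^{-c}X_t$; the corrections from accepted multi-bit flips (where a zero bit flips up but some one-bits flip down and the net weight change is still nonnegative) contribute only lower-order terms once the block weights are well-separated, because a high-order zero bit flipping up dominates any number of low-order one bits flipping down. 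Verifying that these correction terms are genuinely $o\!\left(\frac{c}{n}e^{-c}X_t\right)$, uniformly as $\alpha\to 0$ and over all $t$ in the exponential window, is the technical heart of the argument.

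\textbf{Lower bound.} For the matching lower bound I would argue that a constant fraction of zero bits must each be flipped to one at least once, and that the effective rate at which any fixed bit gets successfully flipped up is at most $(1+o(1))\frac{c}{n}e^{-c}$. A coupon-collector / negative-drift argument then shows that clearing the last surviving zero bits requires $(1+o(1))\frac{e^c}{c}n\log n$ rounds: with $X_t$ small the probability that a given round reduces $X_t$ is at most $X_t\cdot(1+o(1))\frac{c}{n}e^{-c}$, so the expected number of rounds to eliminate the final $\Theta(\log n)$ stubborn bits already sums to the claimed constant. Theorem~\ref{thm:tailbounds} can be invoked to show that the density imbalance between blocks never helps the algorithm by more than a $1+o(1)$ factor over the relevant time scale, so the $e^{-c}$ acceptance penalty is unavoidable in both directions.

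\textbf{Main obstacle.} The hard part is controlling the acceptance probability with the correct constant $e^{-c}$ rather than merely $\Theta(1)$. Unlike the monotone case, acceptance here depends delicately on the \emph{weighted} outcome of all simultaneous flips, and a zero bit in the tail can be accepted even when a head bit flips the wrong way, provided the weights permit it. Pinning down that the dominant contribution to the drift is exactly ``flip one zero bit, leave everything net-nonnegative,'' with multiplicative factor $e^{-c}$ (the probability of flipping \emph{no additional} bit among the $n-1$ others, which tends to $e^{-c}$), and proving the remaining events contribute only $o(1)$ relative error uniformly in $t$, is what makes the leading constant come out correctly and is where Lemma~\ref{lem:lindensity} and the exponential-time tail bounds of Theorem~\ref{thm:tailbounds} must be combined with care.
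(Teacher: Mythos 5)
Your lower bound sketch matches the paper's, but the core claim of your upper bound is false, and this is precisely the difficulty that makes linear functions hard. You claim that, with densities balanced across blocks, the drift of the raw zero count $X_t$ is $-(1+o(1))\frac{c}{n}e^{-c}X_t$, the corrections from accepted multi-bit flips being lower order ``because a high-order zero bit flipping up dominates any number of low-order one bits flipping down.'' That domination holds in \emph{fitness} space but not in \emph{count} space. Take BinVal, $a_i = 2^{n-i}$: whenever a zero bit at position $k$ flips up and no bit above $k$ flips (probability $\approx \frac{c}{n}e^{-ck/n}$ per zero bit), the offspring is accepted \emph{regardless} of what happens below $k$, and in such a step an expected $\approx c(1-d)\frac{n-k}{n}$ one-bits below $k$ flip to zero. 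This correction is a \emph{constant} multiple of the main term $-\frac{c}{n}e^{-ck/n}$, uniformly in how small the density $d$ is; it does not vanish as $d\to 0$ or as blocks get thinner. Summing over zero bits spread with balanced density, the drift of $X_t$ comes out $\approx d\left(c-2+2e^{-c}\right)$, which is \emph{positive} for $c \gtrsim 2$. Lemma~\ref{lem:lindensity} cannot repair this: it is one-sided ($d(I)\le d(J)+\eps$ for higher-weight $I$), hence perfectly consistent with the balanced-density regime in which the count drift has the wrong sign. This is exactly why the paper (following Witt) abandons the raw count and uses the geometrically weighted potential $g(\xi)=2^C\sum_{i:\,\xi_i=0}2^{-Ci/n}$: there the gain $2^C2^{-Ck/n}$ from the flipped zero bit exceeds the expected increase from lower one-bits flipping down by a factor $\frac{c}{C\log 2}(1+o(1))$, a knob that can be made arbitrarily small by choosing $C$ large --- the raw count has no such knob.

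A second gap: even with the correct potential, the error term coming from rounds that flip two or more zero bits is only negligible relative to $\frac{ce^{-c}}{n}g(\xi)$ once $g(\xi)=o(n)$, i.e.\ once the number of zeros is already a small constant fraction of $n$. Your proposal runs a single multiplicative-drift sweep from the random start, which founders on exactly this term. The paper therefore splits the proof: Lemma~\ref{lem:endphase} handles the tail phase ($\le \eps n$ zeros, giving the $(1+o(1))\frac{e^c}{c}n\log n$ term via Theorem~\ref{thm:multiplicative}), while Lemma~\ref{lem:startphase} shows the head phase takes only $O(n)$ steps --- and that lemma, proved via Lemma~\ref{lem:startphase2} (EA under two types of adversarial noise, with an induction over the mutation parameter $c$ and a region/block decomposition), is the genuinely hard part of the theorem. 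Your proposal does not address this phase at all.
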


The lower bound in Theorem~\ref{thm:linearfunctions} is rather straightforward, and we only give a sketch. Assume the number $X_t$ of zero bits in $\xi^{t}$ is small, say $X_t \leq n^{1-\eps}$. Then the probability to flip at least two zero bits in the same round is $O(n^{-2\eps})$. This will turn out to be negligibly small, so let us ignore such ``bad'' rounds for the moment for the sake of clarity. Then the number of zero bits can only decrease if we flip exactly one zero bit, and no one bits. Let $r$ be the number of rounds in which we flip exactly one position. Then the number of such rounds until no zero bits are left is (if there were no bad rounds) the same as the number of rounds needed in a coupon collector process to reduce the number of missing coupons from $n^{1-\eps}$ to $0$, which is well-known to be at least $(1-\eps-o(1))n\log n$ whp. On the other hand, in each round we have probability $(1+o(1))ce^{-c}$ to flip exactly one bit. So the (total) number of rounds until we have seen whp $r= (1-\eps-o(1))n\log n$ rounds in which we flip only a single bit  is $(1-\eps-o(1))e^{c}/c \cdot n\log n$, by  Chernoff bounds. It is easy to adapt the argument to also cope with bad rounds, e.g., by dividing the process into phases form $n^{1-\eps}$ to $n^{1-2\eps}$, from $n^{1-2\eps}$ to $n^{1-4\eps}$ etc., and observing that in each phase with very high probability the number of bad rounds is so small that it does not significantly change the time to reach the next phase. 

\medskip

For the upper bound, we split the proof in two steps: we need a linear time until there are less than $\eps n$ zero bits left (Lemma~\ref{lem:startphase}), which are then removed in an additional $(1+o(1))e^c/c \cdot n\log n$ steps (Lemma~\ref{lem:endphase}). Surprisingly, the proof of the latter part is rather straightforward and uses known techniques, while the proof of Lemma~\ref{lem:startphase} is significantly harder. For this phase we present a new proof that is different from previous approaches: an induction-type argument on the amount of noise that is tolerable in noisy applications of algorithm EA (Lemma~\ref{lem:startphase2}).

\begin{lemma}\label{lem:endphase}
Let $c>0$ be a constant. Then there is a constant $\eps>0$ such that the following holds for every $\xi^0 \in \{0,1\}^n$ that has at most $\eps n$ zero bits. If algorithm EA with mutation rate $c/n$ starts with initial string $\xi^{0}$ then whp it finds the optimum of any $n$-bit linear function with positive weights after at most $(1+o(1))e^c/c \cdot n\log n$.
\end{lemma}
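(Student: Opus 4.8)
Proof proposal.

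The plan is to exhibit a potential with multiplicative drift and invoke Theorem~\ref{thm:multiplicative}. Since $f$ has positive weights, the optimum is the all-ones string, and it is reached precisely when $\xi^t$ has no zero bits. The natural first attempt is the \emph{weighted} count of zero bits, $\Phi_t := \sum_{i=1}^n a_i(1-\xi_i^t)$, rescaled (without loss of generality, since scaling $f$ does not change the algorithm) so that $\min_i a_i = 1$; then $\Phi_t \in \{0\}\cup[1,\infty)$ as required by Theorem~\ref{thm:multiplicative}. The key observation is that every \emph{accepted} offspring satisfies $f(\xi') \ge f(\xi)$, i.e.\ $\Phi_{t+1}\le\Phi_t$, and rejected offspring leave $\Phi$ unchanged; hence every outcome contributes a nonpositive term to $\Exp[\Phi_{t+1}-\Phi_t\mid \Phi_t]$. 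I would therefore lower-bound the expected \emph{decrease} by restricting attention to the cleanest decreasing moves: flipping exactly one zero bit $i$ and no other bit. This event has probability $(c/n)(1-c/n)^{n-1} = (1+o(1))(c/n)e^{-c}$, is always accepted, and lowers $\Phi$ by $a_i$. Summing over the zero bits gives $\Exp[\Phi_{t+1}\mid\Phi_t]\le \big(1-(1-o(1))\tfrac{c}{n}e^{-c}\big)\Phi_t$, a sharp multiplicative drift with rate $\delta=(1-o(1))ce^{-c}/n$; all the messier multi-bit accepted moves only help, because they too decrease $\Phi$.

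Given this drift, Theorem~\ref{thm:multiplicative} yields $\Pr\big[T>\lceil(\log\Phi_0+k)/|\log(1-\delta)|\rceil\big]\le e^{-k}$. Since $|\log(1-\delta)|=(1+o(1))\delta$, the time bound equals $(1+o(1))\tfrac{e^c}{c}\,n(\log\Phi_0+k)$. Choosing $k=k(n)\to\infty$ with $k=o(\log n)$ makes the failure probability $e^{-k}=o(1)$ (so the statement holds whp) while keeping $k$ negligible in the leading term. Thus, \emph{provided} $\log\Phi_0=(1+o(1))\log n$, this gives exactly the claimed $(1+o(1))\tfrac{e^c}{c}\,n\log n$.

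The hard part is precisely controlling the \emph{range} of the potential so that $\log\Phi_0=(1+o(1))\log n$ rather than something larger. With the true weights we only know $\Phi_0\le \eps n\cdot \max_i a_i$, so for weight ratios $a_1/a_n$ that are super-polynomial (e.g.\ $a_i=2^{n-i}$) the factor $\log\Phi_0$ degrades to $\Theta(n)$ and the bound becomes $\Theta(n^2)$. The standard remedy is to replace the $a_i$ by clamped weights $w_i$ whose ratios are bounded, so that the range is $n^{1+o(1)}$ while the weights stay close enough to uniform that the drift rate remains the sharp $(1-o(1))ce^{-c}/n$. The subtlety — and the main obstacle — is that acceptance is still governed by the \emph{true} weights $a_i$, whereas progress is now measured by the $w_i$: an $f$-accepted move that flips a heavy zero bit to one while flipping two lighter one bits to zero can \emph{increase} $\sum_i w_i(1-\xi_i^t)$, so one must choose the clamped weights carefully (following the potential-function technology of Witt~\cite{Witt2013}) to guarantee that accepted moves decrease the potential up to lower-order error terms. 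I would handle this by exploiting that very heavy bits have large individual drift and are fixed within the preceding linear phase (Lemma~\ref{lem:startphase}), together with the block-density control of Lemma~\ref{lem:lindensity}, so that throughout the endgame the surviving zero bits effectively live in a polynomially bounded weight window; on that window the clamped potential coincides with $\Phi$ up to constants, retains the sharp drift computed above, and has range $n^{1+o(1)}$. Feeding this into Theorem~\ref{thm:multiplicative} as in the previous paragraph then completes the proof.
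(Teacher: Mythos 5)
Your first two paragraphs are correct: since every accepted offspring satisfies $f(\xi')\ge f(\xi)$, the true-weight potential $\Phi_t=\sum_i a_i(1-\xi_i^t)$ never increases, and restricting to the disjoint events that exactly one zero bit flips and nothing else gives $\Exp[\Phi_{t+1}\mid \Phi_t]\le (1-(1-o(1))ce^{-c}/n)\Phi_t$, whence Theorem~\ref{thm:multiplicative} yields the bound $(1+o(1))\tfrac{e^c}{c}\,n\log\Phi_0$. You also correctly diagnose that this proves the lemma only when $a_1/a_n=n^{o(1)}$. The genuine gap is that your third paragraph, which is supposed to handle the remaining (and only hard) case, is a plan rather than a proof, and the two structural claims it rests on are unfounded. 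First, the lemma is quantified over \emph{every} $\xi^0$ with at most $\eps n$ zero bits, so you may not invoke a ``preceding linear phase''; and even inside the proof of Theorem~\ref{thm:linearfunctions}, Lemma~\ref{lem:startphase} controls only the \emph{number} of zero bits, not their positions, so it cannot guarantee that heavy bits are already fixed to one. Second, the claim that the surviving zero bits ``effectively live in a polynomially bounded weight window'' is false in general: take $a_i=2^{n-i}$ and $\xi^0$ with zero bits at positions $1,\lfloor n/2\rfloor, n$ (a legal start), whose weights span a ratio of $2^{n-1}$; and Lemma~\ref{lem:lindensity} only compares densities of linear-size blocks of \emph{indices}, which gives nothing of this kind. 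So the core difficulty --- constructing a potential of polynomial range whose drift remains $(1-o(1))ce^{-c}/n$ even though acceptance is decided by the true weights --- is exactly the part you delegate to Witt~\cite{Witt2013} without carrying it out.

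For comparison, the paper resolves this difficulty with one idea that your sketch is missing: make the potential weights depend only on the \emph{rank} of each coefficient, not on its value, and let them decay geometrically, $g(\xi)=2^C\sum_{i:\,\xi_i=0}2^{-Ci/n}$ for a large constant $C$. Then $X_t\le g(\xi^t)\le 2^C X_t$, so $\log g(\xi^0)=\log n+O(1)$ automatically, for arbitrary positive weights. The acceptance rule enters only through the \emph{order} of the weights: flipping a single zero bit is always accepted, contributing $-(1+o(1))\tfrac{ce^{-c}}{n}g(\xi)$ to the drift; flipping a zero bit $k$ together with a heavier one bit $i<k$ is rejected (after a harmless tie-breaking convention); and if only lighter one bits $j>k$ flip along, then $g$ may indeed increase, but only by $2^C\sum_{j>k \text{ flipped}}2^{-Cj/n}$, whose expectation is at most $(1+o(1))\tfrac{c^2}{nC\log 2}\,g(\xi)$ thanks to the geometric decay. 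Choosing $C$ large makes this loss an arbitrarily small fraction of the gain, and a two-stage application (Theorem~\ref{thm:tailbounds} to bring the number of zeros below $\gamma n$ in $O(n)$ steps, then Theorem~\ref{thm:multiplicative} with the loss fraction small) gives the sharp constant $e^c/c$. No structural statement about where the zero bits sit is needed; that is precisely what makes the argument work for arbitrary weights, and it is the step your proposal leaves open.
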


\begin{proof}
Without loss of generality we assume that the weights $a_i$ of the linear function are sorted and positive, i.e.,\ $a_1\geq \ldots \geq a_n >0$. To make the upcoming argument smoother, we will assume that in the special case that exactly one one-bit and one zero-bit are flipped, and both have equal weights, then the algorithm rejects the offspring. Note that we may make this assumption because the offspring and the parent are identical up to reordering the indices.

Our aim is to apply Theorem~\ref{thm:multiplicative}. For the function $g$ we 
count the number of zeroes, but we also distinguish between the positions where the zeroes occur, giving those with a higher coefficient $a_i$ also a higher weight. It seems natural to use the $a_i$ themselves as weights. However, this does not work since Theorem~\ref{thm:drift:t4} only gives a very bad bound if, for example, $a_1/a_n \gg n$. Instead, we fix a constant $C >0$, to be chosen later, and we use the objective function
\[
g(\xi) := 2^C\cdot \sum_{1 \leq i \leq n, \xi_i =0} 2^{-Ci/n}.
\]
Let $X_t$ be the number of zero bits in $\xi^{t}$, and observe that $X_t \leq g(\xi^{t}) \leq 2^CX_t$.

We want to compute the drift $\Exp[g(\xi^{t+1}) \mid \xi^{t}=\xi]- g(\xi)$, so assume $\xi^{t} = \xi$. First observe that the probability to flip at least two zero bits in the same round is $O((X_t/n)^2) = O(g(\xi)^2/n^2)$. Moreover, conditioning on flipping at least two zero bits, the expected number of bit flips is still $O(1)$, so this case contributes at most $O(g(\xi)^2/n^2)$ to the drift (which, as we will see, is negligible). On the other hand, if no zero bit is flipped then the offspring is rejected. So it remains to consider the case where exactly one zero bit is flipped (and possibly additional one bits are flipped). We call the index of this bit $k$. If no other bit is flipped, then we accept the offspring, and the probability of this case is $(1+o(1))ce^{-c}/n$ for any fixed $k$. Hence, this case contributes
\[
-(1+o(1))\frac{ce^{-c}}{n} 2^C \cdot\sum_{1 \leq k \leq n, \xi_{k}=0} 2^{-Ck/n} = -(1+o(1))\frac{ce^{-c}}{n}g(\xi)
\]
to the drift. It remains to consider the case where also one bits are flipped. If all one bits that are flipped have an index $i$ that satisfies $i >k$ then the offspring may or may not be accepted, but in any case the function $g$ increases by at most $S_k:=2^C\sum_{j > k, \xi_j \text{ flipped}} 2^{-Cj/n}$. On the other hand, if a one bit $i$ is flipped with $i<k$ then the fitness does not increase, so the offspring is rejected. (Here we use the assumption from the beginning of the proof). Hence, the change in $g$ is $0$. Note that $S_k$ is always non-negative, so we can upper bound the positive contribution to the drift from all cases where we flip exactly one zero bit by
\begin{align*}
\sum_{1 \leq k \leq n, \xi_k= 0} \Pr[\xi_k \text{ flips}] \cdot \Exp[S_k] & = \sum_{1 \leq k \leq n, \xi_k= 0} \frac{c}{n} \cdot 2^C\sum_{i=k+1}^n\frac{c}{n}2^{-Ci/n} 
\leq \frac{c2^C}{n}\sum_{1 \leq k \leq n, \xi_k= 0}2^{-Ck/n} \underbrace{\sum_{i=1}^\infty \frac{c}{n}2^{-Ci/n}}_{= c/(C\log 2) +o(1)} \\
& = (1+o(1))\frac{c^2}{nC\log 2}g(\xi).
 \end{align*}
Assume that $C$ is so large that $\delta := ce^c/(C\log 2) \leq 1/3$. Then, bringing all cases together, we obtain for any string $\xi$,
\begin{equation}\label{eq:linearendphase}
\Exp[g(\xi^{t+1}) \mid \xi^{t}=\xi] \leq g(\xi)\left(1- (1+o(1))\frac{(1-\delta)ce^{-c}}{n} +O\left(\frac{g(\xi)}{n^2}\right)\right),
\end{equation}
where the term $O(g(\xi)/n^2)$ covers the case that at least two zero bits are flipped.
This inequality implies two things. Firstly, we apply~\eqref{eq:linearendphase} for $C= 3ce^{c}/\log 2$ (i.e., $\delta = 1/3$). Since $g(\xi^{0}) \leq 2^C X_0 \leq 2^C\eps n$, we may choose $\eps$ such that the error term $O(g(\xi^0)/n^2)$ is at most $\frac13 ce^{-c}/n$ at time $t=0$. Then by Theorem~\ref{thm:tailbounds}, for every $\gamma>0$ whp $X_t \leq g(\xi^{t})$ becomes smaller than $\gamma$ in time $t_0 = O(n)$, and it stays so for an exponential number of steps. Having proven this, we may now apply~\eqref{eq:linearendphase} for any constant $C > 3ce^{c}/\ln 2$ (and thus, any constant $0<\delta < 1/3$). Since then $g(\xi^{t}) \leq 2^C X_t \leq  2^C\gamma n$, we may choose $\gamma$ small enough so that the error term $O(g(\xi^{t})/n^2)$ is at most $\delta ce^{-c}/n$ for $t \geq t_0$. Thus we obtain $\Exp[g(\xi^{t+1}) \mid \xi^{t}=\xi] \leq g(\xi)(1- (1-3\delta)ce^{-c}/n)$. Then by Theorem~\ref{thm:multiplicative}, whp we reach zero after at most $(1+o(1))\frac{ne^c}{(1-3\delta)c}\log (g(\xi^{t_0}))$ additional steps. Since $g(\xi^{t_0}) = O(n)$, and since we can choose $\delta$ arbitrarily small, this proves that whp the algorithm finds the optimum after $(1+o(1))e^c/c \cdot n\log n$ steps.

\end{proof}

It remains to show that the number of zeroes is at most $\eps n$ after a short (i.e., linear) time.

\begin{lemma}\label{lem:startphase}
For any $c, \eps >0$ there is $C>0$ such that after $Cn$ steps of algorithm EA on any $n$-bit linear function with positive weights, whp there are at most $\eps n$ zero bits left.
\end{lemma}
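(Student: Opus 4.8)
I would first observe that the lowest-weight bits need never be optimized. Sorting the weights as $a_1\ge\cdots\ge a_n>0$ and writing $\eta:=\eps/2$, split $[n]$ into the top $I=\{1,\dots,(1-\eta)n\}$ and the bottom $J=[n]\setminus I$. Since
\[
d([n],t)=(1-\eta)\,d(I,t)+\eta\,d(J,t)\le (1-\eta)\,d(I,t)+\eta,
\]
it suffices to show that $d(I,t)\le\eps/2$ after a linear number of steps, irrespective of how many zeros remain in $J$. The plan is therefore to track $d(I,t)$ directly and to prove that it has a drift bounded away from $0$ as long as it exceeds $\eps/2$; Theorem~\ref{thm:tailbounds}(a) would then yield the claimed $O(n)$ hitting time.

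\textbf{Why the drift is delicate, and the role of Lemma~\ref{lem:lindensity}.} The difficulty is that the zero-count, even restricted to $I$, need not have negative drift: one accepted round can flip a single heavy zero to a one while simultaneously flipping several lighter ones to zeros, increasing the number of zeros. I would control this with two facts. On the beneficial side, for each zero bit the round that flips only that bit is always accepted and occurs with probability $(1+o(1))\tfrac{c}{n}e^{-c}$, giving a gain of order $d(I,t)\cdot\tfrac{ce^{-c}}{n}$ for the density. On the harmful side, a one bit at position $i$ can become a zero only in an accepted round, and acceptance forces the flipped zeros to make up, in total weight, the weight $a_i$ that is lost; a single compensating flip must then occur at a position of weight at least $a_i$, i.e.\ in $\{1,\dots,i\}$. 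Hence the rate of harmful regressions is governed by the zero-supply in the higher-weight part of the string, and this is exactly what Lemma~\ref{lem:lindensity} bounds: whp, throughout a window $[Cn,e^{n/C}]$, the density profile is monotone up to an arbitrarily small additive constant, so no prefix carries substantially more zeros than the regions below it.

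\textbf{Descending in stages.} A single negative-drift statement will not work, because at moderate density the harmful multi-bit regressions are of the same order as the beneficial single-bit gains. I would instead push $d(I,t)$ down through a finite chain of thresholds $\tfrac12=\tau_0>\tau_1>\cdots>\tau_m=\eps/2$, arguing by induction on $r$: once $d(I,t)\le\tau_r$ is reached, Theorem~\ref{thm:tailbounds}(b) makes this persist for an exponential number of steps (so that all inductive hypotheses hold simultaneously over the whole window), and at this reduced zero-supply the harmful rate becomes a lower-order term against the single-bit gain, so the density has a clean negative drift until it reaches $\tau_{r+1}$ after $O(n)$ more steps by Theorem~\ref{thm:tailbounds}(a). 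Summing over the constantly many stages gives the total bound $Cn$. This bookkeeping of how much regression a configuration can absorb and still make progress is exactly the noise-tolerance induction promised in Lemma~\ref{lem:startphase2}.

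\textbf{The main obstacle.} The crux will be the self-referential nature of this drift estimate in the moderate-density regime: the regressions that block progress are themselves produced by the very zeros one is trying to remove, so the noise cannot be bounded without already controlling the density. Breaking this circularity --- by coupling the harmful rate to the higher-weight zero-supply through Lemma~\ref{lem:lindensity} and then bootstrapping through the thresholds $\tau_r$ --- is where essentially all the work lies; once it is in place, everything else is a routine application of Theorems~\ref{thm:tailbounds} and~\ref{thm:multiplicative}.
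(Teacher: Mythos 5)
There is a genuine gap, and it sits exactly where you locate ``essentially all the work''. Your key claim --- that when a one bit at position $i$ flips to zero, acceptance forces \emph{a single} compensating zero-to-one flip at a position of weight at least $a_i$ --- is false: acceptance only requires the \emph{total} gained weight to be at least $a_i$, and that total can be supplied by several flips of much smaller weight. This breaks the reduction ``control $d(I,t)$ irrespective of how many zeros remain in $J$''. Concretely, take $a_i=1+\zeta$ for $i\in I$ and $a_i=1$ for $i\in J$ with $\zeta$ tiny, and suppose $d(J,t)\approx 1/2$ (as holds at the start). A round that flips one one-bit of $I$ together with at least two zero bits of $J$ and nothing else is always accepted and increases the number of zeros in $I$ by one; for constant $c$ and $\eta$ such rounds occur with probability $\Theta(1)$ per step (each of the $\Theta(n)$ one bits of $I$ contributes $\Theta(1/n)$, since ``at least two zero flips in $J$'' has constant probability), whereas any round that \emph{decreases} the zero count of $I$ must flip a zero of $I$ and so has probability $O(d(I,t))$. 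Hence once $d(I,t)$ is below a constant, its drift is \emph{positive}, not negative, at every one of your thresholds $\tau_r$: the harmful rate is fueled by the zero supply in $J$, which your stages never reduce because you have discarded $J$ from the analysis. Lemma~\ref{lem:lindensity} cannot repair this, since it only bounds the zero density of a high-weight region in terms of that of a lower-weight region, which yields nothing when the latter is large --- and your framework explicitly treats $d(J,t)$ as arbitrary.

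This failure mode --- many low-weight bits jointly compensating one heavier bit --- is precisely what the paper's proof is built around, and its architecture is the mirror image of yours: instead of discarding the bottom $\eta n$ bits, it peels off the \emph{top} $\beta n$ bits, strengthens the statement (Lemma~\ref{lem:startphase2}) so that it tolerates two kinds of adversarial noise, and performs an induction over the mutation parameter $c$ rather than over density thresholds (this is also what Lemma~\ref{lem:startphase2} actually provides, not the threshold bootstrapping you attribute to it): the remaining $(1-\beta)n$ bits run an EA with effective parameter $(1-\beta)c<c_0$, the top bits contributing only noise. The cross-scale compensation is then handled by a case distinction on the weight profile. If the weights decay quickly (at least $D$ blocks of ratio $1+\mu$ inside $R_2$), compensating a flip in $R_1$ from $R_3$ requires exponentially many simultaneous flips, which a Chernoff bound rules out; if the weights are nearly flat (fewer than $D$ blocks), one works with the potential $\phi(\xi)=\sum_i|\{j\in B_i:\xi_j=0\}|(1+\mu)^{-i}$, whose defining property is that any \emph{accepted} round can increase it by at most $c\mu$ in expectation. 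That potential is the tool that tames exactly the flat-weight scenario above, and your proposal contains no substitute for it.
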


Even though Lemma~\ref{lem:startphase} implies that in absolute terms the first phase of algorithm EA takes only linear time, while the second phase requires an additional $\log$-factor, the proof of Lemma~\ref{lem:startphase} turns out to be particularly tricky. We will use an induction-type argument, but not over natural numbers, but rather over values of $c$. For technical reasons, we fix $c_{\max}>0$ sufficiently large (but otherwise arbitrary) and prove the statement for all $0< c < c_{\max}$. 
Note that since $c_{\max}$ is arbitrary, this nevertheless implies the desired result for all constants $c>0$. 

For the inductive argument to go through, we need to use a slightly stronger statement, which we spell out in Lemma~\ref{lem:startphase2}. We will allow two types of noise. We say that there is $\delta_1$-\emph{noise of type 1}, if in each round we flip a coin, and with probability $\delta_1$ an adversary may decide whether we go to the offspring or not, regardless of the objective value. We say that there is $\delta_2$-\emph{noise of type 2} if in each round we flip a coin, and with probability $1-\delta_2$ an adversary may decide on an arbitrary non-negative penalty by which the fitness of the offspring is reduced. Note that the adversary has to set the penalty without looking at the outcome of the offspring.

Note that Lemma~\ref{lem:startphase2} contains Lemma~\ref{lem:startphase} as a special case (the case without noise), so proving Lemma~\ref{lem:startphase2} also concludes the proof of Theorem~\ref{thm:linearfunctions}.


\begin{lemma}\label{lem:startphase2}
If $c_{\max}>0$ is a sufficiently large constant, then for all $0 <\eps \leq 1$ and all $0 < c < c_{\max}$ there is $C>0$ such that for all $n \geq 1$ the following holds. Consider  algorithm EA with arbitrary starting string, with mutation rate $c/n$, and with $\delta_1$-noise of type 1 and $\delta_2$-noise of type 2, where $\delta_1 = \delta_1(\eps,c):=\eps \exp\{-ce^{4c_{\max} + e^{5c_{\max}}}\}$ and $\delta_2 = \delta_2(c):= e^{2(c-c_{\max})}$. Then for any $n$-bit linear function with positive weights, whp for all $t \in [Cn, e^{n/C}]$ the $t$-th search point $\xi^t$ has at most $\eps n$ zero bits.
\end{lemma}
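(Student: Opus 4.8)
The plan is to prove Lemma~\ref{lem:startphase2} by induction on $c$, more precisely by an induction that increases $c$ in steps of some fixed size while \emph{strengthening} the permissible noise. The key idea is that running algorithm EA with mutation rate $c/n$ can be \emph{simulated} by running it with a smaller mutation rate $c'/n$ (with $c' = c - \Delta$ for a fixed step $\Delta$) plus a controlled amount of additional noise: each bit is flipped with probability $c'/n$, and then, independently, with some small probability each bit receives an additional flip so that the total flip probability becomes $c/n$. The extra flips that this second stage introduces can be absorbed into type-1 and type-2 noise, because an adversary who is allowed to decide the outcome (type~1) or to penalize the offspring (type~2) can only make the process worse. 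Thus if the statement holds for $c'$ with a certain noise budget $(\delta_1',\delta_2')$, and if the additional noise introduced by the simulation is small enough, the statement transfers to $c$ with the prescribed noise budget $(\delta_1(\eps,c),\delta_2(c))$.

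First I would set up the base case: for $c$ sufficiently small (say $c$ below some absolute constant $c_0$), the drift on the number $X_t$ of zero bits is negative and multiplicative as long as $X_t \ge \eps n$, exactly as in the proof of Theorem~\ref{thm:clessthan1}; the noise levels $\delta_1,\delta_2$ are small enough that they perturb this drift only by lower-order terms, so I can apply Theorem~\ref{thm:tailbounds}(a) to reach $\eps n$ zero bits in linear time whp, and Theorem~\ref{thm:tailbounds}(b) to stay below $\eps n$ for $e^{\Omega(n)}$ steps. For the inductive step, I would fix the step size $\Delta$ (a small absolute constant) and argue that the coupling described above maps a run at mutation rate $c$ onto a run at rate $c - \Delta$ where the induction hypothesis applies. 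The noise-accounting is what the precise, seemingly baroque formulas $\delta_1 = \eps\exp\{-ce^{4c_{\max}+e^{5c_{\max}}}\}$ and $\delta_2 = e^{2(c-c_{\max})}$ are designed to make work: at each inductive step the tolerable type-1 noise shrinks by a controlled factor and the tolerable type-2 noise shrinks geometrically in $c$, so that after finitely many steps (at most $c_{\max}/\Delta$) one lands exactly at the target values. The double-exponential in the definition of $\delta_1$ reflects that the noise budget is multiplied by a factor bounded by $e^{O(1)}$ (coming from the $e^{\alpha c}$-type corrections) at each of $O(c_{\max})$ steps.

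The main obstacle I expect is the noise bookkeeping in the inductive step: one must verify that the \emph{extra} bit flips introduced when going from rate $c-\Delta$ to rate $c$ can genuinely be charged against the allotted increase in $\delta_1$ and $\delta_2$, and in particular that the adversary model is strong enough to dominate the true process. Concretely, a round at rate $c$ in which the extra flips change the outcome must either fall under the type-1 event (probability $\delta_1$, adversary controls acceptance) or be dominated by a type-2 penalty; I would bound the probability that extra flips occur at all by roughly $\Delta$ per bit, translate this into a per-round failure probability, and show it fits inside the gap $\delta_1(\eps,c)-\delta_1(\eps,c-\Delta)$ and inside $\delta_2$ via the stochastic-domination argument already used in Lemma~\ref{lem:lindensity}. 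The second delicate point is maintaining the whp guarantee uniformly over the whole time window $[Cn,e^{n/C}]$: since each inductive step only loses an $e^{-\Omega(n)}$ probability and there are $O(1)$ steps, a union bound suffices, but I must ensure the constant $C$ produced by the induction stays bounded as $c$ ranges over $(0,c_{\max})$, which is why the quantifier structure ``for all $\eps$ and $c$ there is $C$'' (rather than a single $C$) is essential.
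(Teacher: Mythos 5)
Your high-level plan --- an induction on the mutation parameter in which the tolerable noise $(\delta_1,\delta_2)$ shrinks as $c$ grows --- is indeed the skeleton of the paper's argument, but the reduction you propose for the inductive step has a genuine, and in this framework unrepairable, gap. You want to realize a run at rate $c/n$ as a run at rate $(c-\Delta)/n$ plus independent ``extra'' flips, and charge the extra flips to the noise. But the lemma's noise model only corrupts the \emph{accept/reject decision}: under type-1 noise the next state is either the parent $\xi$ or the rate-$(c-\Delta)$ offspring $\xi''$, and under type-2 noise only the fitness comparison is biased by a penalty fixed in advance. In your coupling, a round with extra flips moves the chain to a \emph{third} string $\xi'$ (namely $\xi''$ with the extra flips applied), a transition the noisy rate-$(c-\Delta)$ process cannot make at all; in particular, extra flips can plant additional zero bits inside \emph{accepted} offspring, so the zero-count of the true process is not dominated by that of the coupled noisy process --- the domination you invoke points the wrong way. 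And even granting some absorption mechanism, the bookkeeping cannot close: extra flips occur in a round with probability $1-e^{-\Delta}=\Theta(\Delta)$, a constant, whereas the type-1 budget they must be charged to is at most $\delta_1(\eps,c-\Delta)=\eps \exp\{-(c-\Delta)e^{4c_{\max}+e^{5c_{\max}}}\}$, which from the second inductive step onwards (i.e.\ once $c-\Delta\geq\Delta$) is double-exponentially small in $c_{\max}$. A constant per-round cost cannot be paid from a double-exponentially small budget, no matter how the steps are arranged.

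The paper escapes both problems by shrinking the \emph{string} rather than the rate, inside an infimum-of-failing-$c$ contradiction argument: with $c_0$ the infimum of parameters where the lemma fails for $\eps$, one takes $c$ slightly above $c_0$, first uses validity at $2\eps$ to get down to $2\eps n$ zeros, then drives the zero-density of the $\alpha n$ \emph{highest-weight} bits below $\hat\eps:=e^{4c_{\max}+e^{5c_{\max}}}\delta_1$ (this is where the real work sits: the two-case analysis of the weight structure, the block decomposition, Lemma~\ref{lem:lindensity} and the potential $\phi$), and only then views the EA restricted to the remaining $(1-\alpha)n$ bits as a noisy EA with effective parameter $\tilde c=(1-\alpha)c<c_0$. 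This reduction is compatible with the noise model precisely where yours is not: the sub-instance's offspring is the literal restriction of the true offspring, so the high-weight bits influence only the acceptance decision --- a flip of one of the at most $\hat\eps\alpha n$ high-weight zero bits forces acceptance (type-1 noise, probability at most $c\beta\hat\eps$), and flips of high-weight one bits act as a penalty (type-2 noise, probability at most $c\beta$). Moreover the injected type-1 noise is proportional to $\hat\eps\propto\delta_1$, so it fits into the gap $\delta_1(\eps,\tilde c)-\delta_1(\eps,c)$ by convexity of $\delta_1$ in $c$; this matching of the noise to the derivative of the budget is exactly what your constant-size simulation cost destroys. That your proposal never engages with the weights of the linear function is a symptom of the same issue: the actual difficulty --- controlling how the high-weight bits distort the acceptance decision for the rest --- has been hidden inside an accounting identity that cannot balance.
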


\begin{proof}
We first investigate the case that $c$ is small. Observe that for a fixed $0<\eps \leq 1$, and $c$ sufficiently small, the probability to flip more than one bit in the same round can be bounded by $O(c^2)$. Thus, if the number of zero bits is at least $\eps n/2$ then the drift in the number of zero bits is at most $-\delta_2 ce^{-c}\eps/4 + \delta_1 c+ O(c^2)$. Here  $ce^{-c}\eps/2$ is a lower bound (for large $n$) for the probability that we flip exactly one bit and that this bit is a zero bit and the factor $\delta_2-\delta_1\ge \delta_2/2$ is a lower bound for the probability that the noise of type 1 or 2 will not obstruct us from accepting this bit flip. The remaining two terms are upper bounds for the cases in which the number of zero bits increases.
That is, if $c_{\max}$ is sufficiently large then for $c = \eps e^{-4c_{\max}}$ this drift is negative, and the lemma follows for the pair $\eps, c$ from Theorem~\ref{thm:tailbounds}. In other words, for each fixed $0<\eps \leq 1$ the lemma holds for some $c>0$.

Assume now for the sake of contradiction that the lemma is false for some $0<\eps \leq 1$, and let $\eps_0>0$ be the supremum over all $\eps$ for which it is false.    
Then there exists an $\eps > \eps_0/2$ for which the lemma is false. From now on we will stick with this $\eps$ and derive a contradiction.  Let $c_0 = c_0(\eps)$ be the infimum over all $c$ for which the lemma fails for the pair $(\eps,c)$. Then $c_0 \geq \eps e^{-4c_{\max}}$ by the considerations above. We will show that there exists $\rho_c = \rho_c(\eps)>0$ such that the lemma holds for all pairs $(\eps,c)$ with $c_0 \leq c < \min\{c_0(1+\rho_c),c_{\max}\}$, thus yielding the desired contradiction. In the following we assume that $c$ is an arbitrary but fixed value in the interval 
$[c_0,\min\{c_0(1+\rho_c),c_{\max}\})$, where $\rho_c$ is defined after equation~\eqref{eq:eps:ass2} below.

Before going into details we explain the main idea of our proof. 
First note that the lemma is \emph{true} for $\eps^+:=2\eps$: if $2\eps >1$ this is trivial, otherwise this follows from the definition of $\eps_0$ and the fact that $\eps > \eps_0/2$.
Thus we know that algorithm EA (with parameter $p=c/n$) reduces the number of zeroes in linear time to $2\eps n$ (and will stay below this number for an exponential time). 
We will show that there exists $\alpha>0$ and $0 < \hat \eps< \eps$ so that after an additional linear number of steps, there are at most $\hat \eps \alpha n$ zero bits left among the first $\alpha n$ bits. The remaining part has only $\tilde n := (1-\alpha)n$ bits, thus the mutation rate in this part is $(1-\alpha)c/\tilde n$. We will choose $\alpha$ in such a way that $\tilde c := (1-\alpha)c  < c_0$, so we know from the choice of $c_0$ that the lemma is true for the pair $(\eps, \tilde c)$. Thus, the remaining string contains at most $\eps \tilde n$ zeros after linear time, at least if we would apply the algorithm only to that part of the string. In order to really derive a contradiction we must take into account that the initial $\alpha n$ part of the string provides noise for the optimization of the remaining part: if a zero bit in the first part is flipped into a one bit (and possibly other bits in this part are also flipped), then this gives noise of type~1; if one or several one bits are flipped then this may give noise of type~2. We thus need to check that both types of noise are sufficiently small. More precisely, we will check that this additional noise is small enough, so that the total noise is less than the values for $\delta_1$ and $\delta_2$ with respect to $\eps$ and $\tilde c$.

To prove these claims we need to define some constants.  Let $\delta_1 := \delta_1(\eps,c)$ and $\delta_2 := \delta_2(c)$ be as defined in the statement of the lemma. Moreover, let
\begin{equation}\label{eq:defhateps}
\hat \eps :=  e^{4c_{\max} + e^{5c_{\max}}}\delta_1 \qquad \text{ and } \qquad 
\beta := \delta_1.
\end{equation}
 Recall that $\eps e^{-4c_{\max}} \le c_0\le c < c_{\max}$. So we may assume without loss of generality that $c_{\max}$ is large enough so that 
\begin{equation}\label{eq:eps:ass1}
(2e\eps c\exp\{-e^{5c_{\max}}\})^{\exp\{e^{5c_{\max}}\}} \leq \delta_1 \quad \text{ and } \quad  c_{\max}\beta = c_{\max}\delta_1 \le {\textstyle\frac1{200}} \hat \eps \delta_2e^{-c_{\max}-e^{5c_{\max}}} \le {\textstyle\frac{1}{16}}.
\end{equation}
Note that \eqref{eq:eps:ass1} implies in particular that 
\begin{equation}\label{eq:eps:ass2}
1-c\beta \ge e^{-2c\beta}\qquad\text{and}\qquad 1-2c\beta \ge e^{-4c\beta}.
\end{equation}
With these definitions at hand, we let $\rho_c$ be so small that $c(1-\beta) < c_0$ whenever $c_0 \leq c \leq c_0(1+\rho_c)$. Note that $\beta$ implicitly depends on $c$ and $\eps$ here, but it is easy to check that the definition $\rho_c := \min_{\eps e^{-4c_{\max}} \leq c \leq c_{\max}}\{\beta(c)\}$ yields the desired inequalities. 

%
%
%
%
%
As before, we may assume that the bits are sorted by weights, in descending order. We call the first $\beta n$ bits \emph{the first region} $R_1$, the next $\beta n$ bits \emph{the second region} $R_2$, and the remaining $(1-2\beta)n$ bits the \emph{third region} $R_3$. We assume further without loss of generality that the maximum weight in the second section is $1$ (by scaling all $n$ weights by the same factor), and we split the second and third region into blocks as follows. For $i\geq 1$, the block $B_i$ consists of all bits in $R_2 \cup R_3$ with weight in $[(1+\mu)^{-i-1},(1+\mu)^{-i})$, where we will define $\mu$ in a moment. Note that some of the blocks may be empty. 
Recall that $d(B_i) = d(B_i,t)$ is the density of zero bits in $B_i$ with respect to $\xi^{t}$, for any non-empty block $B_i$, and similarly for $d(R_1)$ etc.

We distinguish two cases (in which we will choose $\alpha=\beta$ and $\alpha=2\beta$, respectively) depending on how many blocks are fully contained in the second region. Note that we also count empty blocks here, so the number of fully contained blocks in the second region is (up to rounding issues) $ \log_{1+\mu} 1/a_i$, where $i = 2\beta n$ is the first index in the third region.
To describe the cases precisely we need two more constants. Let 
\begin{equation}\label{eq:defmu}
D := \min\{D' \in \N_0 \mid (1+\mu)^{D'} \geq \exp\{e^{5c_{\max}}\}\}, \quad\text{where }\mu := \exp\{-c_{\max}-2e^{5c_{\max}}\}\delta_2\beta \hat\eps/100.
\end{equation}

\noindent
\textbf{First case}: there at least $D$ blocks fully contained in $R_2$. In this case we will set $\alpha = \beta$. Recall from above that we may assume that the number of zero bits is at most $2\eps n$. 
We consider the effect of the algorithm on $R_1$, i.e.\ on the first $\beta n$ bits. We want to argue that we can bring $d(R_1)$ below $\hat \eps$ in linear time, and that it stays so for exponential time. To this end, we will show that we have a negative drift whenever $d(R_1) \geq \hat \eps/2$, so let us assume $d(R_1) \geq \hat \eps/2$. The probability that we flip exactly one zero bit in $R_1$, and no other bit anywhere, is at least $ce^{-c_{\max}}\beta\hat \eps/4$ if $n$ is sufficiently large; in this case we accept this bit flip with probability at least $\delta_2-\delta_1 > \delta_2/2$, and the number of zero bits decreases by one. 
On the other hand, we can increase the number of zero bits in $R_1$ only if one of the following (non-disjoint) events occurs:
\begin{list}{}{\itemsep 0pt\topsep0pt\leftmargin0.4cm\labelsep 0.1cm\labelwidth0.3cm}
\item[\emph{(i)}\hfill]at least two bits in $R_1$ are flipped: this happens with probability at most $c^2\beta^2$ and, conditioned on this,  the expected increase is then at most $\beta c+2 \leq 3$, with room to spare;
\item[\emph{(ii)}\hfill]at least one bit is flipped in $R_2$: this happens with probability at most $c\beta$  and the expected increase (in $R_1$) is then at most $\beta c$; 
\item[\emph{(iii)}\hfill]more than $(1+\mu)^D$ zero bits are flipped in $R_3$: since there are at most $2\eps n$ zero bits, this probability is bounded by the probability that a binomially distributed random variable with parameters $2\eps n$ and $c/n$ is at least $(1+\mu)^D \geq \exp\{e^{5c_{\max}}\}$; using the Chernoff bound $\Pr[\Bin(N,q) \geq (1+\delta)Nq] \leq (e/(1+\delta))^{(1+\delta)Nq}$ with $Nq = 2\eps c$ and $(1+\delta)Nq = \exp\{e^{5c_{\max}}\} $ we see that this probability is at most $(2e\eps c\cdot \exp\{-e^{5c_{\max}}\})^{\exp\{e^{5c_{\max}}\}}$;
the expected increase in this case is again at most $\beta c$; 
\item[\emph{(iv)}\hfill]there is noise of type 1: this happens with probability $\delta_1$ and also results in an expected increase of at most $\beta c$. 
\end{list}
Summarizing the above terms, we see that the expected drift for the number of zero bits in the first region can be bounded by
\[
\Exp[X_{t+1}-X_t] \le - {\textstyle\frac18}\delta_2ce^{-c_{\max}}\beta \hat\eps + 3c^2\beta^2+c^2\beta^2 +(2e\eps c\exp\{-e^{5c_{\max}}\})^{\exp\{e^{5c_{\max}}\}} \beta c + c\beta\delta_1 \stackrel{\eqref{eq:eps:ass1}}= -\Omega(1).
\]
Hence, by Theorem~\ref{thm:tailbounds} after linear time we will reach $d(R_1) \leq \hat \eps/2$, and since we have negative drift whenever $d(R_1)$ is in the interval $[\hat \eps/2, \hat \eps]$, it will stay below $\hat \eps$ for exponential time.

We want to apply the lemma for $\eps$ 
and $\tilde c = c(1-\beta) < c_0$ to conclude that in this period $d(R_2 \cup R_3)$ drops below $\eps$. So we must show that the additional noise caused by the first region is not too large. Noise of type 1 is generated whenever a zero bit in $R_1$ is flipped, which happens with probability at most $c\beta\hat\eps$. Thus the total amount of noise of type 1 is at most
\[
\delta_1(\eps,c) + c\beta \hat \eps \stackrel{\eqref{eq:defhateps}}{\leq} \delta_1(\eps,c) + c\beta \frac{-\partial\delta_1(\eps, c)}{\partial c} \leq \delta_1(\eps,\tilde c),
\]
where in the last step we used convexity of $\delta_1$. 
For noise of the type 2, the fitness of the offspring may decrease if any bit in $R_1$ is flipped, which happens with probability at most $c \beta$. Hence, the overall probability that the adversary will {\em not} add a penalty is at least
\[
\delta_2(c)\cdot(1-c\beta) \stackrel{\eqref{eq:eps:ass2}}{\geq} \delta_2(\tilde c)e^{2c\beta}\cdot e^{-2c\beta} = \delta_2(\tilde c),
\]
as required. This concludes the first case.\smallskip

\textbf{Second case}: there are less than $D$ blocks fully contained in the second region. In this case we will apply the lemma to the third region ($\alpha = 2\beta$) with parameters $\eps$ and $\tilde c  = c(1-2\beta) < c_0$. To bound the noise coming from $R_1$ and $R_2$, we will first show that whp $d(R_2) \leq \hat \eps$ after linear time. 
We use the auxiliary fitness function 
\[
\phi(\xi) := \sum_{i\geq 1} |\{j \in B_i: \xi_j =0\}| \cdot (1+\mu)^{-i},
\]
%
and we let $\phi_{\max}$ be the fitness of the zero string, i.e. $\phi_{\max} = \sum_{i\geq 1} |B_i| \cdot (1+\mu)^{-i}$. Note that by definition of the second case $\phi_{\max} \geq \beta n \cdot (1+\mu)^{-D} \geq  \beta n \cdot \exp\{-e^{5c_{\max}}\}/(1+\mu)$ by definition of $D$. 

We call a block in the third region \emph{long} if it has length at least $\beta \mu n/2$, and \emph{short} otherwise. The contribution of short blocks to $\phi_{\max}$ is at most
\begin{equation}\label{eq:shortblocks}
\sum_{B_i \text{ short}} |B_i|(1+\mu)^{-i} < \frac{1}{2} \beta \mu n \sum_{i \geq 0} (1+\mu)^{-i} < \beta n.
\end{equation}
As long blocks have linear length, there can only be a constant number of long blocks. Thus we may apply Lemma~\ref{lem:lindensity} to conclude that (after a linear number of steps) we have $d(B_i,t) \geq d(R_2,t)/2$ for all long blocks $B_i$ and for exponential time. Note that the contribution of a block $B_i$ to $\phi(\xi^{t})$ can be written as $d(B_i,t)|B_i|(1+\mu)^{-i}$, and the contribution of $R_2$ is at least $d(R_2,t)\beta n(1+\mu)^{-D}$, where the later follows from the definition of the second case. Hence,
\begin{align}
\phi(\xi^{t})  \geq \frac{d(R_2,t) \beta n}{(1+\mu)^{D}} + \sum_{B_i \text{ long}}\text{ $\frac12$} d(R_2,t)|B_i|(1+\mu)^{-i} & > \frac{d(R_2,t)}{2(1+\mu)^{D}}\left(2 \beta n + \sum_{B_i \text{ long}} |B_i|(1+\mu)^{-i} \right) \nonumber \\ & \stackrel{\eqref{eq:shortblocks}}{\geq} \frac{d(R_2,t)\phi_{\max}}{2(1+\mu)^{D}}.\label{eq:xit}
\end{align}

Now we are ready to show that $\phi$ has a negative drift for $d(R_2,t)\ge \hat\eps/2$, so assume the latter. First observe that with probability $ce^{-c}(1+o(1))$ there is exactly one bit flipped, and there is no noise with probability at least $\delta_2-\delta_1 > \delta_2/2$. In this case each bit has probability $1/n$ to be the flipped one, so in this case we decrease $\phi(\xi^{t})$ by $\phi(\xi^{t})/n$ on average. Thus the contribution to the drift is (for sufficiently large $n$) more negative than $-ce^{-c}\delta_2\phi(\xi^{t})/(4n) \leq -ce^{-c}\delta_2\hat\eps (1+\mu)^{-D}\phi_{\max}/(16n)$ by~\eqref{eq:xit}. If at least one bit in $R_1$ is flipped (which happens with probability at most $c\beta$), then this increases $\phi(\xi^{t})$ in expectation by at most $c\phi_{\max}/n$, so this adds at most $c^2\beta\phi_{\max}/n$. Similarly, noise of type 1 contributes at most $c\delta_1 \phi_{\max}/n$ to the drift. 
Finally, consider the case where no bit in $R_1$ flips, and where no noise of type 1 occurs. Let $r_i$ and $s_i$ be the number of bits in the $i$-th block $B_i$ that flip from zero to one and from one to zero, respectively. Then a necessary (but not sufficient) condition for accepting the offspring is $\sum_{i} s_i(1+\mu)^{-i-1} - \sum r_i(1+\mu)^{-i} \leq 0$. Note that this condition is still necessary if noise of type 2 occurs. Thus, in this case $\Delta \phi = \sum_{i} s_i(1+\mu)^{-i} - \sum r_i(1+\mu)^{-i} \leq \mu \sum_{i}r_i(1+\mu)^{-i}\leq \mu \sum_{i}r_i$, which is at most $c\mu$ in expectation. Summarizing, we see that for $d(R_2,t)\ge \hat\eps/2$ the drift is at most
\[
\Exp[\phi(\xi^{t+1})-\phi(\xi^{t})] \leq -ce^{-c}\delta_2\hat\eps (1+\mu)^{-D}\phi_{\max}/(16n) + c^2\beta \phi_{\max}/n  +c\delta_1 \phi_{\max}/n + c\mu \stackrel{\eqref{eq:eps:ass1}, \eqref{eq:defmu}}{=} -\Omega(1)
\]
for sufficiently large $c_{\max}$. 

As $d(R_2,t) > \hat\eps/2$ implies  a negative drift for $\phi(\xi^{t})$, Theorem~\ref{thm:tailbounds} implies that after a linear number of steps we have $d(R_2) \leq \tfrac{3}{4}\hat\eps$, and that this bounds remains true for an exponential number of steps. By Lemma~\ref{lem:lindensity} the density in $R_1$ is, at least after another linear number of steps, at most $\frac{4}5\hat\eps$. Combining both facts we conclude that from then on we have $d(R_1 \cup R_2) \leq \hat \eps$ for an exponential number of steps. 

We now apply the lemma similarly as in the first case, this time to $R_3$. I.e., we set $\alpha = 2\beta$, and apply the lemma for $\eps$ and $\tilde c = c(1-2\beta) < c_0$. We need to check that we do not introduce too much noise. For noise of type 1, we have at most
\[
\delta_1(\eps,c) + 2c\beta\hat\eps \stackrel{\eqref{eq:defhateps}}{\leq} \delta_1(\eps,c) + 2c\beta \frac{-\partial\delta_1(\eps, c)}{\partial c} \leq \delta_1(\eps,\tilde c),
\]
where again we used convexity in the last step. For noise of the type 2 we conclude as in the first case that the probability \emph{not} to have noise is at least
\[
\delta_2(c)(1-2c\beta) \stackrel{\eqref{eq:eps:ass2}}{\geq}  \delta_2(\tilde c)e^{4c\beta}\cdot e^{-4c\beta} \geq \delta_2(\tilde c),
\]
as desired.
\end{proof}

\paragraph{Acknowledgement.} After being appointed at ETH Z\"urich, Ji\v{r}\'i Matou\v{s}ek convinced his colleagues in the algorithms group to establish a joint seminar to teach undergraduate students  to do (and love) research. In the third edition of this seminar the second author of this paper presented the topic at hand. More precisely, the task was to come up with fresh ideas to elegantly reprove the state of the art. While this turned out to be a too hard task for undergraduate students, it eventually led to the present paper, which for us will always be a memory for our much admired colleague Jirka Matou\v{s}ek.


\end{document}